\newtheorem{thm}{Theorem}[section]
\newtheorem{defn}{Definition}[section]
\newtheorem{prop}{Proposition}[section]
\newtheorem{rmk}{Remark}[section]
\newtheorem{lem}{Lemma}[section]
\newtheorem{assumption}{Assumption}[section]
\DeclareMathOperator{\codim}{codim}
\DeclareMathOperator{\reg}{reg}
\DeclareMathOperator{\vol}{vol}
\DeclareMathOperator{\sing}{sing}
\DeclareMathOperator{\depth}{depth}
\begin{document}

\title{Functional Inequalities on Simple Edge Spaces}

\author{Dimitrios Oikonomopoulos}
\address{Athens, Greece}

\email{oikonomod@gmail.com, dimoiko@math.uni-bonn.de}
\subjclass[2010]{Primary MSC 46E35, MSC 53C25}

\keywords{Simple edge spaces, Functional Inequalities, Optimal Constants}

\maketitle

\begin{abstract}
In this paper we are focusing on functional inequalities on compact simple edge spaces. More precisely we address the question whether the classical functional inequalities (Sobolev, Poincar\'e) hold in this setting, and as a by-product of our methods we obtain an optimality result concerning the $B-$constant of the Sobolev inequality.
\end{abstract}




\section{Introduction}

Stratified spaces constitute an important part of singular spaces. Informally speaking, a stratified space is a topological space that can be partitioned into smooth manifolds (strata) of different dimension. Although this statement is far from complete, it is a guiding principle behind the idea of stratified spaces. The study of these spaces was initiated by Whitney \cite{Whitney}, Thom \cite{stratifiedThom} and Mather \cite{stratifiedMather} among others. Later, Goresky, MacPherson  and Cheeger studied the intersection homology and $L^2$-cohomology of these spaces (\cite{goreskyhomology1} and \cite{goreskyphersoncheeger}). It was Cheeger with his seminal paper \cite{Cheeger} that initiated the study of these spaces from an analytical point of view, and more precisely the properties of the Laplace operator on manifolds with conical singularities. The program of laying the analytic foundations of these spaces was taken up since, and still is a very active area of research.

An important role for the study of analytical questions is played by Sobolev spaces and their properties. In the case of compact stratified pseudomanifolds equipped with an iterated edge metric, Sobolev spaces have been studied as objects that describe domains of elliptic operators (\cite{hartmannleschvertmanstratified}, \cite{wittpackage}, \cite{GilKrainerMendoza},\cite{Lesch} among others) and as means to solving other problems (for example \cite{yamabestratified}, \cite{dai2017perelman}) among other research directions.

In this paper we are focusing on Sobolev spaces on compact simple edge spaces, namely stratified spaces of depth 1, equipped with an iterated edge metric. We address the question whether functional inequalities hold in this case. More precisely our aim is twofold. At first we prove the following

\begin{thm}
	Let $X$ be a compact stratified pseudomanifold of dimension $m$, endowed with an iterated edge metric on $\reg(X)$ and let $k=1$ or $k=2$. Suppose that for every singular stratum $Y$ of $X$ we have the condition
	\begin{align*}
	\codim(Y)=m-i>kp,\ \text{where}\ i=\dim(Y).        
	\end{align*}
	Then for $k=1, 2$, if $\depth(X)=1$, or the strata $Y$ with $\depth(Y) >1$ satisfy Assumption \ref{independece of y assumption}, then $X$ admits a sequence of $(k,p)-$cut-offs.	
\end{thm}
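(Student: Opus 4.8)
The plan is to reduce the statement to a local construction in a neighborhood of each singular stratum and to build the cut-offs directly from the radial distance to the strata, exploiting the codimension hypothesis. Since $X$ is compact it has finitely many strata, each carrying a distinguished (collar) neighborhood $\mathcal{U}_Y$; it suffices to produce on each $\mathcal{U}_Y$ a sequence $\phi^Y_n$ with values in $[0,1]$ that vanishes near $\sing(X)$, equals $1$ outside a neighborhood of $\sing(X)$ shrinking as $n\to\infty$, and satisfies $\|\nabla^j\phi^Y_n\|_{L^p}\to 0$ for $1\le j\le k$. Extending each $\phi^Y_n$ by $1$ outside $\mathcal{U}_Y$ and setting $\rho_n=\prod_Y\phi^Y_n$ yields a global sequence vanishing near all of $\sing(X)$ and converging to $1$; because the strata are finite in number, the Leibniz expansion of $\nabla^j\rho_n$ is a finite sum of products of the $\nabla^{j'}\phi^Y_n$, and the only overlaps of the $\mathcal{U}_Y$ occur along nested strata, which is precisely the configuration treated by the depth induction below.

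For a stratum $Y$ of depth $1$ the neighborhood $\mathcal{U}_Y$ is isometric to a bundle over $Y$ with fibre the truncated cone $C(L)=(0,1)\times L$ carrying $g_Y+dr^2+r^2g_L$, where the link $L$ is a \emph{smooth} closed manifold with $\dim L=\codim(Y)-1$. Writing $c=\codim(Y)$, the volume element is comparable to $r^{c-1}\,dr\,\dvol_L\,\dvol_Y$. Fix a smooth $\chi\colon\mathbb{R}\to[0,1]$ with $\chi\equiv 0$ on $(-\infty,1]$ and $\chi\equiv 1$ on $[2,\infty)$ and set $\phi^Y_n=\chi(nr)$, a function of the radial coordinate alone. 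Its derivatives are supported on the annulus $\{1/n\le r\le 2/n\}$, where $|\nabla\phi^Y_n|\lesssim n$ and $|\nabla^2\phi^Y_n|\lesssim n^2$ (the Hessian of a radial function on the cone contributes both $\phi''$ and $r^{-1}\phi'$, both of size $n^2$ there). Hence for $1\le j\le k$,
\[
\int|\nabla^j\phi^Y_n|^p\,\dvol\;\lesssim\;n^{jp}\int_{1/n}^{2/n}r^{c-1}\,dr\;\lesssim\;n^{jp-c},
\]
which tends to $0$ precisely because $c>kp\ge jp$. Combined with $\phi^Y_n\to 1$ a.e.\ and dominated convergence (the volume is finite as $c>kp\ge 1$), this settles $\depth(X)=1$.

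For the higher-depth strata the link $L$ is itself a compact stratified space of depth $\depth(X)-1$, and $\sing(X)\cap\mathcal{U}_Y$ consists of the zero section $\{r=0\}$ together with the cone over $\sing(L)$, so a radial cut-off alone no longer lands in $\reg(X)$. I would argue by induction on depth, combining the radial cut-off $\phi^Y_n=\chi(nr)$ with a $(k,p)$-cut-off $\sigma_n$ for the link $L$ and setting $\rho_n=\phi^Y_n\,\sigma_n$. The codimension hypothesis is inherited: a stratum $Z$ of $L$ corresponds to a deeper stratum of $X$ whose codimension in $X$ equals $\codim_L(Z)>kp$, so $L$ falls under the inductive hypothesis and $\sigma_n$ exists. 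For $k=1$ the Leibniz rule gives $|\nabla\rho_n|\le|\nabla\phi^Y_n|+|\nabla\sigma_n|$; the second term is computed in the cone metric via $|\nabla_X\sigma_n|=r^{-1}|\nabla_L\sigma_n|$, whence $\int|\nabla_X\sigma_n|^p\,\dvol\lesssim\bigl(\int_0^1 r^{c-1-p}\,dr\bigr)\int_L|\nabla_L\sigma_n|^p\,\dvol_L\to 0$ since $c>p$ and $\sigma_n$ is a link cut-off.

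The genuine obstacle is $k=2$, where the Hessian of the product contains the cross term $\nabla\phi^Y_n\otimes\nabla\sigma_n$. On the support of $\nabla\phi^Y_n$ one has $r\sim 1/n$ and $|\nabla_X\sigma_n|\sim n|\nabla_L\sigma_n|$, so this term is of size $n^2|\nabla_L\sigma_n|$ and contributes $\lesssim n^{2p-c}\int_L|\nabla_L\sigma_n|^p\,\dvol_L$, which still tends to $0$ because $c>2p$ and the link cut-offs have vanishing first-order energy. The remaining point — and the reason Assumption \ref{independece of y assumption} is invoked — is that in a genuinely twisted edge bundle the link, its singularities, and hence the $\sigma_n$ may depend on the base point $y\in Y$; one must be able to choose $\sigma_n$ globally over $Y$ and at a scale compatible with $\phi^Y_n$ so that the cross terms and the $r$-integrations decouple exactly as above. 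The Assumption guarantees this independence of the deeper structure from $Y$, which is what makes the product construction and the induction close. I expect verifying this compatibility uniformly over $Y$, together with bookkeeping the finitely many nested strata, to be the most delicate part; the radial estimates themselves are routine once $\codim(Y)>kp$ is in force.
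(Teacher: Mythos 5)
Your overall strategy is the paper's own: induction on the depth, local cut-offs of the form (radial cut-off $\chi(nr)$) times (link cut-off $\sigma_n$), with the codimension hypothesis entering through $\int_{1/n}^{2/n} n^{jp}r^{c-1}\,dr\sim n^{jp-c}\to 0$; your depth-one base case and your $k=1$ inductive step coincide with the paper's argument. But there is a genuine gap in your $k=2$ step: the Leibniz expansion of $\nabla^2(\phi^Y_n\sigma_n)$ has three kinds of terms, namely $(\nabla^2\phi^Y_n)\,\sigma_n$, the cross term $\nabla\phi^Y_n\otimes\nabla\sigma_n$, and $\phi^Y_n\,\nabla^2\sigma_n$, and you estimate only the first two. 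The third term is precisely the one that forces the paper's machinery: the ambient Hessian of a function of $z$ alone is \emph{not} simply $r^{-2}(\nabla^{L})^2\sigma_n$. By Lemma \ref{seconddifferentiallocalformula} it contains, besides $r^{-2}|(\nabla^{L})^2\sigma_n|$, a term $r^{-2}|\nabla^{L}\sigma_n|$ coming from the cone Christoffel symbols $\Gamma^z_{rz'}=\delta^z_{z'}/r$ and $\Gamma^r_{zz'}=-rg_{L,zz'}$, and, when $g_L$ depends on $y$, further terms built from $\partial_y g_L$ contracted with $\nabla^{L}\sigma_n$. It is exactly these last contributions that Assumption \ref{independece of y assumption} eliminates; so the Assumption is used concretely in the term you omitted, not (as you suggest) merely to "choose $\sigma_n$ globally over $Y$ at a compatible scale". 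With the Assumption in force, the missing estimate does close by the same pattern as your $k=1$ computation:
\begin{align*}
\int \phi_n^p\,|\nabla^2\sigma_n|^p\,\dvol \lesssim \Bigl(\int_0^2 r^{c-1-2p}\,dr\Bigr)\Bigl(\|(\nabla^{L})^2\sigma_n\|^p_{L^p(L)}+\|\nabla^{L}\sigma_n\|^p_{L^p(L)}\Bigr)\to 0,
\end{align*}
the $r$-integral being finite because $c>2p$. A second, smaller gap is the gluing: taking $\rho_n=\prod_Y\phi^Y_n$ produces, for nested strata $Y\subseteq\overline{Z}$, cross terms $\nabla\phi^Y_n\otimes\nabla\phi^Z_n$ on $T_Y\cap T_Z$, whose estimate requires relating the two radial functions there; your depth induction does not treat this configuration (it handles deeper strata \emph{inside} the link of $Y$, not the interaction of two separately constructed tubular cut-offs). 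The paper avoids the issue by summing rather than multiplying, $\tilde\chi_n=\sum_i\rho_i\chi_{n,U_i}+\rho_{int}$, with a partition of unity whose first and second derivatives are bounded (Proposition \ref{boundedpartitions}).

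On the positive side, your treatment of the cross term $\nabla\phi^Y_n\otimes\nabla\sigma_n$ is correct and simpler than the paper's. You exploit that this term is supported in $\{1/n\le r\le 2/n\}$, so the $r$-integration contributes $n^{-c}$ against the $n^{2p}$ from the derivatives, giving $n^{2p-c}\|\nabla^{L}\sigma_n\|^p_{L^p(L)}\to 0$ with no coordination between the two factors. The paper instead bounds the radial factor by its sup norm $C_n$ over the whole cone and compensates by choosing the link cut-offs along a diagonal, $\|(\nabla^{L})^{j}b_n\|_{L^p}\le 1/(nC_n)$; your localization makes this diagonal choice unnecessary, which is a genuine, if modest, streamlining of the induction once the missing term above is put in.
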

This result generalises the result obtained in \cite{parabolicityguneysubei} to second order cut-offs, and it is used as a step to prove a density theorem about Sobolev spaces, as well as it is used to provide estimates for the optimal $B$-constant in the Sobolev inequality.

After proving this result, we restrict our attention to functional inequalities on compact simple edge spaces. The reason for doing so, is that the neighborhood of a singular stratum $Y$ can be partitioned into locally Euclidean neighborhoods. This, together with a Hardy inequality allow us to obtain the Sobolev inequality for $p\in [1,m)$, with $m=\dim(X)$, i.e. there exists $A,B >0$, such that for every $u\in W^{1,p}_0(X)$ we have
\begin{equation}
\tag{$I_p$}
\|u\|_{\frac{mp}{m-p}}\leq A (\int_X |\nabla u|^pdv)^{1/p} + B(\int_X |u|^pdv)^{1/p},
\label{eqn: SobolevIneqIntroduction}
\end{equation}
where $p^* = \frac{mp}{m-p}$.

Apart from the Sobolev inequality, we prove the validity of the Rellich embedding, which states that for $X$ compact simple edge space of dimension $m>1$, with $p,q$ that satisfy $1\leq p<m$, $p\neq m-\dim(Y)$ for every singular stratum $Y$ of $X$ and $q<p^*$, the embedding
\begin{align*}
W^{1,p}_0(X)\hookrightarrow L^q(X)    
\end{align*}
is compact. Rellich embedding implies Poincar\'e inequality, i.e. for $p$ satisfying the above condition and also $p$ satisfying the condition $p<m-\dim(Y)$ for every singular stratum $Y$ of $X$, we obtain a $C>0$ such that for every $u\in W^{1,p}_0(X)$ we have
\begin{align*}
\|u-u_X\|_p \leq C \|\nabla u\|_p,
\end{align*}
where $u_X = \frac{1}{\vol(X)}\int_X u(x)dv(x)$. Combining this with the Sobolev inequality, ones obtains a Sobolev-Poincar\'e inequality, i.e. for $u\in W^{1,p}_0(X)$, we have
\begin{align*}
\|u-u_X\|_{p^*}\leq C\|\nabla u\|_p.
\end{align*}
Finally, all these constructions and chain of inequalities lead to the main theorem, namely:

\begin{thm}\label{B-optimal Constant Theorem}
	Let $X$ be a connected, compact simple edge space of dimension $m>1$. Then if $1\leq p<m-\dim(Y)$ for every singular stratum $Y$ of $X$, there exists $A>0$ such that
	\begin{equation}
	\tag{$I_{p,{B_{opt}}}$}
	\|u\|_{p^*}\leq A(\int_X |\nabla u|^pdv)^{1/p} + \vol(X)^{-\frac{1}{m}}(\int_X |u|^pdv)^{1/p}.
	\label{eqn:BestB}
	\end{equation}
	Moreover, the constant $\vol(X)^{-\frac{1}{m}}$ is optimal, in the sense that if there exists a $B>0$ such that \eqref{eqn:Sobolevp} holds with $B$, then $B\geq \vol(X)^{-\frac{1}{m}}$.
\end{thm}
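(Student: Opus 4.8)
The plan is to establish the Sobolev inequality with the specific constant $B = \vol(X)^{-1/m}$ first, and then prove that no smaller $B$ can work by testing against a well-chosen family of functions. I begin by recalling that from the chain of inequalities developed earlier—the general Sobolev inequality \eqref{eqn: SobolevIneqIntroduction} valid for $p \in [1,m)$, together with the Poincar\'e and Sobolev--Poincar\'e inequalities that hold under the hypothesis $p < m - \dim(Y)$—one has \emph{some} valid pair of constants $(A_0, B_0)$. The standard technique (following Aubin's approach on manifolds) is to show that the optimal $B$ is determined purely by the volume: by a scaling/concentration argument the best first Sobolev constant $A$ is governed by the local Euclidean geometry, while $B$ is governed by the global $L^p \to L^{p^*}$ comparison, which on a space of finite volume $\vol(X)$ forces $B \geq \vol(X)^{-1/m}$.

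For the existence half (that $B = \vol(X)^{-1/m}$ is achievable), I would argue that one may always trade a worse first constant for the sharp second constant. Concretely, given that \eqref{eqn: SobolevIneqIntroduction} holds with constants $(A_0, B_0)$, I would show that for every $\varepsilon > 0$ the inequality holds with $(A_\varepsilon, \vol(X)^{-1/m} + \varepsilon)$; then one takes the limit, using that the set of admissible $A$ for a fixed admissible $B$ is closed. The mechanism is the following interpolation: split $u = (u - u_X) + u_X$, apply the Sobolev--Poincar\'e inequality to the mean-zero part $u - u_X$ (whose $p^*$-norm is controlled by $\|\nabla u\|_p$ alone, contributing only to the $A$-term), and estimate the constant function $u_X$ directly, noting $\|u_X\|_{p^*} = \vol(X)^{1/p^*} |u_X| = \vol(X)^{-1/m} \vol(X)^{1/p} |u_X| \le \vol(X)^{-1/m}\|u\|_p$ by Jensen/H\"older. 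The hypothesis $p < m - \dim(Y)$ is exactly what guarantees Poincar\'e holds, so this is where the dimensional restriction is used.

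For the optimality half, the plan is to insert into \eqref{eqn:BestB} a sequence of test functions concentrating on the constant function, so that the gradient term becomes negligible relative to the mass term. The cleanest choice is to take $u \equiv 1$ (or a sequence of Lipschitz functions approximating the constant whose gradients vanish in $L^p$, e.g. using the $(1,p)$-cut-offs whose existence is guaranteed by Theorem~1.1). Since $X$ is compact these are admissible in $W^{1,p}_0(X)$. For $u \equiv 1$ one computes $\|u\|_{p^*} = \vol(X)^{1/p^*}$, $\|\nabla u\|_p = 0$, and $\|u\|_p = \vol(X)^{1/p}$; plugging into any valid inequality $\|u\|_{p^*} \le A\|\nabla u\|_p + B\|u\|_p$ yields $\vol(X)^{1/p^*} \le B\,\vol(X)^{1/p}$, hence
\begin{equation*}
B \;\geq\; \vol(X)^{\frac{1}{p^*} - \frac{1}{p}} \;=\; \vol(X)^{-\frac{1}{m}},
\end{equation*}
where the last equality uses $\frac{1}{p^*} = \frac{1}{p} - \frac{1}{m}$. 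This gives the sharp lower bound and matches the achievable value, closing the argument.

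The main obstacle I anticipate is the existence direction rather than optimality, which is essentially a one-line test-function computation. Specifically, the delicate point is ensuring that the Sobolev--Poincar\'e inequality applies to $u - u_X$ with a constant that depends only on $\|\nabla u\|_p$ and contributes nothing to the $B$-term; this requires that the mean value $u_X$ be well-defined and that the constant functions genuinely lie in $W^{1,p}_0(X)$ via the cut-off approximation, which is why the compactness of $X$ and the $(1,p)$-cut-off construction of Theorem~1.1 are needed. A secondary technical care is verifying that the limit $\varepsilon \to 0$ in the first constant does not destroy admissibility, i.e.\ that one really lands on a finite $A$ paired with $B = \vol(X)^{-1/m}$ rather than being forced to let $A \to \infty$; this is handled by a standard compactness/closedness argument on the admissible region, but it is the step that genuinely uses the full strength of the preceding inequalities.
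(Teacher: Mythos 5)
Your proposal is correct and follows essentially the same route as the paper: achievability via the split $u = (u-u_X)+u_X$, Sobolev--Poincar\'e on the mean-zero part and H\"older on the constant part; optimality by testing with constants, which are admissible in $W^{1,p}_0(X)$ only because of the $(1,p)$-cut-offs guaranteed by the codimension condition (the paper plugs the cut-offs $\chi_n$ directly into $(I_p)$ and passes to the limit rather than invoking $u\equiv 1$, and note that compactness of $X$ alone would not suffice here). One minor remark: the $\varepsilon$-approximation/closedness scaffolding in your existence half is unnecessary, since your splitting mechanism already yields $B=\vol(X)^{-1/m}$ exactly, paired with the finite Sobolev--Poincar\'e constant as $A$.
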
   

\section*{Acknowledgements}
This paper contains part of the results obtained during my PhD studies at Rheinische Friedrichs-Wilhelms-Universit\"at Bonn. I would like to thank BIGS for the financial support during the whole duration of my studies. I would also like to thank my supervisor Prof.~Dr. Matthias Lesch for his guidance and encouragement, as well as suggesting the cut-offs in Proposition 3.2.

\section{Preliminaries}

In this section we describe the class of singular spaces we are working with, and state some preliminary facts.

\begin{defn}
	A stratified space $X$ is a metrizable, locally compact, second countable space which admits a locally finite decomposition into a union of locally closed strata $\mathfrak{G}=\{Y_\alpha\}$, where each $Y_\alpha$ is a smooth, open, connected manifold, with dimension depending on the index $\alpha$. We assume the following:
	\begin{itemize}
		\item If $Y_\alpha,\ Y_\beta\in\mathfrak{G}$ and $Y_\alpha\cap\overline{Y_\beta}\neq\emptyset$, then $Y_\alpha\subseteq\overline{Y_\beta}$.
		
		\item Each stratum $Y$ is endowed with a set of 'control data' $T_Y,\ \pi_Y$ and $\rho_Y$; here $T_Y$ is a neighborhood of $Y$ in $X$ which retracts onto $Y$, $\pi_Y:T_Y\to Y$ is a fixed continuous retraction and $\rho_Y:T_Y\to [0,2)$ is a 'radial function' in this tubular neighborhood such that $\rho_Y^{-1}(0)=Y$. Furthermore, we require that if $Z\in\mathfrak{G}$ and $Z\cap T_Y\neq\emptyset$, then $(\pi_Y,\rho_Y):T_Y\cap Z\to Y\times [0,2)$, is a proper smooth submersion.
		
		\item If $W,Y,Z\in\mathfrak{G}$ and if $p\in T_Y\cap T_Z\cap W$ and $\pi_Z(p)\in T_Y\cap Z$, then $\pi_Y(\pi_Z(p)) = \pi_Y(p)$ and $\rho_Y(\pi_Z(p))=\rho_Y(p)$.
		
		\item If $Y,Z\in\mathfrak{G}$, then $Y\cap\overline{Z}\neq\emptyset\Leftrightarrow T_Y\cap Z\neq\emptyset$, $T_Y\cap T_Z\neq\emptyset \Leftrightarrow Y\subseteq \overline{Z}, \ Y=Z$ or $Z\subseteq\overline{Y}$.
		
		\item For each $Y\in\mathfrak{G}$, the restriction $\pi_Y:T_Y\to Y$ is a locally trivial fibration with fiber the cone $C(L_Y)$ over some other stratified space $L_Y$ (called the link over $Y$), with atlas $\mathcal{U}_Y=\{(\phi,\mathcal{U})\}$ where each $\phi$ is a trivialization $\pi_Y^{-1}(\mathcal{U})\to\mathcal{U}\times C(L_Y)$ and the transition functions are stratified isomorphisms of $C(L_Y)$ which preserve the rays of each conic fibre as well as the radial variable $\rho_Y$ itself, hence are suspensions of isomorphisms of each link $L_Y$ which vary smoothly with the variable $y\in\mathcal{U}$.
	\end{itemize}
	If in addition we let $X_j$ be the union of all strata of dimensions less than or equal to $j$, and require that
	\begin{align*}
	X=X_m\supseteq X_{m-1}=X_{m-2}\supseteq X_{m-3}\supseteq\dots\supseteq X_0
	\end{align*}
	and $X\setminus X_{m-2}$ is dense in $X$, then we say that $X$ is a stratified pseudomanifold of dimension $m$.
\end{defn}

The depth of a stratum $Y$ is the largest integer $k$ such that there is a chain of pairwise distinct strata $Y=Y_k,\dots,Y_0$ with $Y_j\subseteq\overline{Y_{j-1}}$ for $1\leq j\leq k$. A stratum of maximal depth is always a closed manifold. The maximal depth of any stratum in $X$ is called the depth of $X$ as a stratified space. We refer to the dense open stratum of a stratified pseudomanifold $X$ as its regular set, and the union of all other strata as the singular set,
\begin{align*}
\reg(X):= X\setminus \sing(X)\ \text{where}\ \sing(X) = \bigcup_{Y\in\mathfrak{G},\ \depth Y>0} Y.
\end{align*}

If $X$ and $X'$ are two stratified spaces, a stratified isomorphism between them is a homeomorphism $F:X\to X'$ which carries the open strata of $X$ to the open strata of $X'$ diffeomorphically and such that $\pi'_{F(Y)}\circ F = F\circ \pi_Y,\ \rho'_Y=\rho_{F(Y)}\circ F$ for all $Y\in \mathfrak{G}(X)$.

In the rest of this paper we will restrict our attention to compact stratified pseudomanifolds, which we will always denote by $X$, unless otherwise stated.

\begin{rmk}
	An implication of the notion of depth, is that a stratum $Y$ has depth $k$ if on the decomposition $V_Y\times \reg (C(L_Y))$, $L_Y$ has depth $k-1$. For example, in the case of simple edge spaces, $L_Y$ is always going to be a compact manifold.
\end{rmk}

\begin{defn}\label{iteratededgemetricdefinition}
	Let $X$ be a stratified pseudomanifold and let $g$ be a Riemannian metric on $\reg(X)$. If $\depth(X)=0$, that is $X$ is a smooth manifold, an iterated edge metric is understood to be any smooth Riemannian metric on $X$. Suppose now that $\depth(X)=k$ and that the definition of iterated edge metric is given in the case $\depth(X)\leq k-1$; then, we call a smooth Riemannian metric $g$ on $\reg(X)$ an iterated edge metric if it satisfies the following properties:
	\begin{itemize}
		\item Let $Y$ be a stratum of $X$ such that $Y\subseteq X_i\setminus X_{i-1}$. For each $q\in Y$, there exists an open neighborhood $V_Y$ of $q$ in $Y$ such that
		\begin{align*}
		\phi: \pi_Y^{-1}(V_Y)\to V_Y\times C(L_Y)
		\end{align*}
		is a stratified isomorphism. In particular,
		\begin{align*}
		\phi: \pi_Y^{-1}(V_Y)\cap \reg(X)\to V_Y\times \reg(C(L_Y))
		\end{align*}
		is a smooth diffeomorphism. Then, for each $q\in Y$, there exists one of these trivializations $(\phi, V_Y)$ such that $g$ restricted on $\pi_Y^{-1}(V_Y)\cap \reg(X)$ satisfies
		\begin{align*}
		(\phi^{-1})^*\big(g_{|_{\pi_Y^{-1}(V_Y)\cap \reg(X)}}\big) = dr^2+h_{V_Y}+r^2g_{L_Y}+k=g_0 + k,
		\end{align*}
		where $h_{V_Y}$ is the restriction on $V_Y$ of a Riemannian metric $h_Y$ defined on $Y$. $g_{L_Y}$ is a smooth family of bilinear tensors parametrized by $y\in Y$, that restricts to an iterated edge metric on $\reg(L_Y)$, and $k$ is a $(0,2)-$tensor satisfying $|k|_{g_0}=O(r^\gamma)$ for some $\gamma>0$, where $|\cdot|_{g_0}$ is the Frobenious norm.
	\end{itemize}
\end{defn}

\begin{rmk}\label{quasiisometryremark}
	The condition that $|g-g_0|_{g_0}=O(r^\gamma)$ for some $\gamma>0$ implies that $g$ and $g_0$ are quasi-isometric. That is very helpful in a variety of situations, because $g_0$ is easier to handle. For example, they produce equivalent gradient norms, i.e. $\exists\ C>0$ such that $1/C|\nabla^gu|\leq |\nabla^{g_0}u|\leq C|\nabla^gu|$. 
\end{rmk}

After introducing our setting, we recall some basic facts and definitions about Sobolev spaces on manifolds. Let $(M,g)$ be a manifold with metric $g$. We say that $f$ is equivalent to $g$ ($f\sim g$), if and only if $f(x) = g(x)$ almost everywhere with respect to the measure $\mu$ coming from the Riemannian structure. Then for $p\in [1,\infty)$, we denote by $L^p(M) = L^p$ the space of the equivalence classes of measurable functions $f:M\to \mathbb{C}$, such that
\begin{align*}
\|f\|_{L^p} = \big(\int_M |f|^p dvol_g \big)^{\frac{1}{p}}<\infty.
\end{align*}
For $p=\infty$, we define $L^\infty(M)$ as the space of the equivalence classes of measurable functions $f:M\to \mathbb{C}$, such that
\begin{align*}
\|f\|_{L^\infty} = \text{ess}\sup_M |f(x)|<\infty.
\end{align*}
For $p\in (1,\infty)$, $L^p$ is a reflexive Banach space, and for $p=2$, $L^2(M)$ is a Hilbert space with inner product
\begin{align*}
\langle f,g\rangle = \int_M f\overline{g} dvol_g.
\end{align*}
For $k\in\mathbb{N}$ and $p\in [1,\infty)$ we define
\begin{align*}
W^{k,p}(M) = \big\{ &u:M\to\mathbb{C}:\exists\ \nabla^iu\ \text{distributionally},\\ &\text{and}\ \nabla^iu\in L^p(M,T^*M^{\otimes i})\ \text{for}\ i=0,1,\dots,k\big\}
\end{align*}
with norm
\begin{align*}
\|u\|_{W^{k,p}}=\big( \sum_{i=0}^k \int_M |\nabla^iu|_{T^*M^{\otimes i}}^p dvol_g\big)^{\frac{1}{p}}.
\end{align*}
where $\nabla$ is the Levi-Civita connection induced from $g$. By adopting the Einstein summation, in local coordinates we have that 
\begin{align*}
|\nabla^iu|^p_{T^*M^{\otimes i}} :=\big(g_{T^*M^{\otimes i}} (\nabla^iu,\nabla^iu)\big)^\frac{p}{2}= \big( g^{\mu_1\nu_1}\dots g^{\mu_i\nu_i}(\nabla^iu)_{\mu_1\dots\mu_i}(\nabla^iu)_{\nu_1\dots\nu_i}\big)^{\frac{p}{2}}.
\end{align*}
For example, $(\nabla u)_\mu = \frac{\partial u}{\partial\mu}$ and $(\nabla^2 u)_{\mu\nu} = \frac{\partial^2u}{\partial\mu\partial\nu} - \Gamma^{k}_{\mu\nu}\frac{\partial u}{\partial k}$. Moreover, we define
\begin{align*}
W^{k,p}_0(M) = \overline{C_c^\infty(M)}^{\|.\|_{W^{k,p}}},
\end{align*}
i.e., the completion of smooth, compactly supported functions on $M$ with respect to the norm $\|.\|_{W^{k,p}}$.
Concerning the Sobolev space $W^{k,p}(M)$, we have the following Meyers-Serrin type theorem
\begin{prop}\label{Meyers-Serrin}
	Let $(M,g)$ be a manifold and let $W^{k,p}(M)$ be defined as above. Then the space $W^{k,p}(M)\cap C^\infty(M)$ is dense in $W^{k,p}(M)$ with respect to the norm $\|\cdot\|_{W^{k,p}}$. 			
\end{prop}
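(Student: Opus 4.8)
The plan is to adapt the classical Meyers--Serrin ``$H=W$'' argument to the Riemannian setting, via localisation by a partition of unity, Friedrichs mollification in coordinate charts, and reassembly using local finiteness. Completeness of $(M,g)$ plays no role here, which is precisely why the approximants will lie in $C^\infty(M)\cap W^{k,p}(M)$ rather than in $C_c^\infty(M)$. Since $M$ is a (second countable, paracompact) manifold, I would first fix a countable, locally finite atlas $\{(U_j,\varphi_j)\}$ by relatively compact coordinate charts, chosen so that on each $U_j$ the metric coefficients $g_{\mu\nu}$, the inverse $g^{\mu\nu}$, the density $\sqrt{\det g}$, the Christoffel symbols $\Gamma^\lambda_{\mu\nu}$ and all their partial derivatives up to order $k$ are bounded, with $\det g$ bounded away from $0$ and $\infty$; this is possible because $g$ is smooth and each $\overline{U_j}$ is compact. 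I would then take a smooth partition of unity $\{\chi_j\}$ subordinate to this cover, with $\chi_j\in C_c^\infty(U_j)$.

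Given $u\in W^{k,p}(M)$, the Leibniz rule for distributional covariant derivatives shows that each $\chi_j u$ again lies in $W^{k,p}(M)$, with support in the compact set $\operatorname{supp}\chi_j\subset U_j$: the covariant derivatives of $\chi_j u$ up to order $k$ are finite sums of terms pairing derivatives of the smooth bounded function $\chi_j$ with covariant derivatives of $u$, all of which lie in $L^p$. Transporting $\chi_j u$ to $\mathbb{R}^m$ through $\varphi_j$, the key point is that, on functions supported in $U_j$, the intrinsic norm $\|\cdot\|_{W^{k,p}}$ is equivalent to the flat Euclidean Sobolev norm. Indeed, expanding the covariant derivatives
\begin{align*}
(\nabla u)_\mu=\partial_\mu u,\qquad (\nabla^2 u)_{\mu\nu}=\partial_\mu\partial_\nu u-\Gamma^\lambda_{\mu\nu}\partial_\lambda u,\ \dots,
\end{align*}
one sees that each $\nabla^i u$ equals the $i$-th partial derivative plus lower-order partial derivatives with coefficients built from the $g^{\mu\nu}$, the $\Gamma^\lambda_{\mu\nu}$ and their derivatives, all bounded on $U_j$; combined with $\det g$ being bounded above and below (so that $dvol_g$ and Lebesgue measure are comparable), this yields two-sided estimates between the norms. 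Hence $v_j:=(\varphi_j^{-1})^*(\chi_j u)$ is a compactly supported element of the flat $W^{k,p}(\mathbb{R}^m)$, and the classical mollification $v_j*\rho_\varepsilon$ produces smooth, compactly supported approximations converging to $v_j$ in that norm.

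Finally, I would fix $\varepsilon>0$ and, for each $j$, use the chart equivalence and mollification to choose $w_j\in C_c^\infty(U_j)$ with $\|w_j-\chi_j u\|_{W^{k,p}(M)}<\varepsilon\,2^{-j}$, keeping the mollification parameter small enough that $\operatorname{supp} w_j\subset U_j$. Because the cover is locally finite, near every point $w:=\sum_j w_j$ is a finite sum of smooth functions, so $w\in C^\infty(M)$; and since $u=\sum_j\chi_j u$ enjoys the same local finiteness, the triangle inequality gives
\begin{align*}
\|w-u\|_{W^{k,p}(M)}\le\sum_j\|w_j-\chi_j u\|_{W^{k,p}(M)}<\varepsilon,
\end{align*}
so in particular $w-u\in W^{k,p}(M)$, whence $w\in W^{k,p}(M)$ and $w$ is $\varepsilon$-close to $u$. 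Letting $\varepsilon\to0$ proves density.

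I expect the main obstacle to be the norm-equivalence step inside a chart: unlike in $\mathbb{R}^n$, the covariant derivatives mix in the Christoffel symbols and their derivatives, so one must verify carefully that passing between $\{\nabla^i u\}_{i\le k}$ and the Euclidean partial derivatives of order $\le k$ costs only bounded factors on a relatively compact chart, and that this comparison survives mollification (which commutes with partial, not covariant, derivatives). Everything else --- the Leibniz rule, the local finiteness of the reassembled sum, and the triangle inequality --- is routine once this comparison is secured.
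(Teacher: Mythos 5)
Your proof is correct. Note first that the paper does not actually prove this proposition: its ``proof'' is a citation to Theorem 2.9 of \cite{H=WGuneysu}, so your proposal supplies in full the standard Meyers--Serrin argument that the paper outsources --- locally finite relatively compact charts, a subordinate partition of unity, chart-wise comparison of the covariant and flat Sobolev norms, Friedrichs mollification with support control, and locally finite reassembly with errors $\varepsilon 2^{-j}$. The two points you flag or gloss both go through, and it is worth recording why. For the chart comparison, the argument is elementary precisely because the paper's definition of $W^{k,p}$ demands \emph{all} intermediate derivatives $\nabla^i u\in L^p$ for $0\le i\le k$: testing the distributional identity defining $\nabla(\nabla^i u)$ against test tensors of the form $\Phi = (\text{smooth metric coefficients})\cdot\phi$, $\phi\in C^\infty_c(U_j)$, shows that the weak partial derivatives of the components of $\nabla^i u$ exist in $L^p_{loc}$ and satisfy $\partial_\mu(\nabla^i u)_{\nu_1\dots\nu_i}=(\nabla^{i+1}u)_{\mu\nu_1\dots\nu_i}+\sum\Gamma\cdot(\nabla^i u)$ weakly; since this system is triangular with identity in the top order and all coefficients are bounded on $\overline{U_j}$, the two-sided norm equivalence holds at the distributional level and survives mollification, with no elliptic regularity needed (such machinery, as in the cited reference, only becomes necessary for Sobolev scales defined through top-order derivatives or operator powers rather than through all intermediate covariant derivatives). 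For the reassembly, the infinite triangle inequality deserves its one-line justification: by local finiteness every compact set meets only finitely many of the sets $\operatorname{supp}w_j\cup\operatorname{supp}\chi_j$, so $w-u$ agrees locally with a finite partial sum, and Minkowski together with Fatou applied to the partial sums yields $\|w-u\|_{W^{k,p}}\le\sum_j\|w_j-\chi_j u\|_{W^{k,p}}<\varepsilon$. With these two remarks made explicit, your argument is a complete, self-contained proof of the proposition, and arguably more informative than the paper's bare citation.
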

\begin{proof}
	See Theorem 2.9 in \cite{H=WGuneysu}.
\end{proof}

\section{Construction of Cut-Off Functions}\label{Construction of Cut-Off Functions}

Now let $X$ be a compact stratified pseudomanifold of arbitrary depth. In this section we show how to obtain for this space sequences of cut-off functions. We begin by giving a precise

\begin{defn}
	Let $(M,g)$ be a manifold, $p\in [1,\infty)$ and $k\in\mathbb{N}$ and let $\{\chi_n\}\subseteq C^\infty_c(M)$. We call $\{\chi_n\}$ a sequence of $(k,p)-$cut-offs if the following properties hold:
	\begin{itemize}
		\item $\forall n\in\mathbb{N}$ we have $0\leq \chi_n \leq 1$. 
		\item For every $K\subseteq M$ compact, $\exists\ n_0\in\mathbb{N}$ such that, $\forall n\geq n_0$ we have $\chi_{n}|_K=1$.
		\item $\forall j=1,\dots,k:$ $\int_M |\nabla^j\chi_n|_{T^{\otimes j}M}^p d\mu_g \to 0,$ as $n\to\infty$.
	\end{itemize}
\end{defn}

Here we will prove the existence of $(k,p)-$cut-off functions on stratified pseudomanifolds for $k=1$ and $k=2$ under some assumptions on $p$ and the iterated edge metric $g$ of $\reg(X)$. But, before doing so, we need some preliminary lemma's.

\begin{lem}\label{ChristoffelSymbols}
	Let $X$ be a stratified pseudomanifold of dimension $m$, with an iterated edge metric $g_0$, that near each singular stratum $Y$, under the trivialisation $\phi$ as in Definition \ref{iteratededgemetricdefinition}, takes the form
	\begin{align*}
	g_0 = h_{V_Y} + dr^2 + r^2 g_{L_Y}
	\end{align*}
	where $g_{L_Y}$ is a tensor parametrized by $y\in Y$ such that for each $y\in Y$ it restricts on an iterated edge metric $g_{L_Y}(y)$ on $L_Y$. Then the Christoffel symbols $\Gamma^k_{ij}$ in coordinates $r,y,z$ take the form
	\begin{itemize}
		\item For $k=r$.
		\begin{align*}
		\begin{array}{c c c}
		\Gamma^r_{rr}=0, & \Gamma^r_{ry}=0, &\Gamma^r_{rz}=0, \\
		\Gamma^r_{yy'}=0, & \Gamma^r_{yz}=0, & \Gamma^r_{zz'}=-r g_{L,zz'}.
		\end{array}
		\end{align*}
		\item For $k = z$.
		\begin{align*}
		\begin{array}{c c c}
		\Gamma^z_{rr}=0, & \Gamma^z_{ry}=0, & \Gamma^z_{rz'}=\frac{\delta^z_{z'}}{r}, \\
		\Gamma^z_{yy'}=0, & \Gamma^z_{yz'}=\frac{1}{2}\sum_{\tilde{z}}(\partial_yg_{z'\tilde{z}}g^{z\tilde{z}}), & \Gamma^z_{\tilde{z}z'}=\Gamma^{z}_{\tilde{z}z'}(g_L).
		\end{array}
		\end{align*}
		\item For $k = y$
		\begin{align*}
		\begin{array}{c c c}
		\Gamma^y_{rr}=0, & \Gamma^y_{ry'}=0, & \Gamma^y_{rz}=0, \\
		\Gamma^y_{\tilde{y}y'}=\Gamma^y_{\tilde{y}y'}(h_{V_Y}), & \Gamma^y_{y'z}=0, & \Gamma^y_{zz'}=-\frac{1}{2}\sum_{y'}(\partial_{y'}g_{zz'})h^{y'y}.
		\end{array}
		\end{align*}
	\end{itemize}
\end{lem}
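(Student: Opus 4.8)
The plan is to feed the block-diagonal metric $g_0 = h_{V_Y} + dr^2 + r^2 g_{L_Y}$ directly into the standard Christoffel formula
\[
\Gamma^k_{ij} = \frac{1}{2} g^{kl}\left(\partial_i g_{jl} + \partial_j g_{il} - \partial_l g_{ij}\right)
\]
and exploit the product structure to kill most terms before any real computation begins. In the coordinates $(r,y,z)$ the nonzero metric components are $g_{rr}=1$, $g_{yy'}=(h_{V_Y})_{yy'}$ and $g_{zz'}=r^2 g_{L,zz'}$, with all mixed blocks vanishing; consequently the inverse metric is block diagonal as well, with $g^{rr}=1$, $g^{yy'}=h^{yy'}$ and $g^{zz'}=r^{-2} g_L^{z z'}$. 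Because $g^{kl}$ couples only indices lying in the same block, the sum over $l$ collapses onto the single block determined by the free index $k$, and this is precisely why the three cases $k=r$, $k=z$, $k=y$ decouple and can be handled one at a time.

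The first thing I would record is a short list of vanishing derivatives that trivialise almost everything: $g_{rr}=1$ is constant, so every derivative $\partial g_{rr}=0$; the base tensor $h_{yy'}$ depends on the $y$-variables alone, so $\partial_r h_{yy'}=\partial_z h_{yy'}=0$; and the link metric $g_{L,zz'}$ is independent of $r$, so the only $r$-dependence of the $z$-block sits in the explicit warping factor, giving $\partial_r(r^2 g_{L,zz'})=2r\,g_{L,zz'}$. With these three facts in hand, each of the eighteen listed symbols is either manifestly zero, because all three terms differentiate a vanishing or $r$-constant component, or collapses to a one-line evaluation.

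For $k=r$ only $g^{rr}=1$ survives, so $\Gamma^r_{ij}=\tfrac12(\partial_i g_{jr}+\partial_j g_{ir}-\partial_r g_{ij})$; the first two terms vanish unless an index equals $r$, and the sole survivor is $\Gamma^r_{zz'}=-\tfrac12\partial_r(r^2 g_{L,zz'})=-r\,g_{L,zz'}$. For $k=z$ the decisive cancellation is between the warping factor and its inverse: in the intra-link symbols the $r^{-2}$ from $g^{z z'}$ and the $r^2$ inside $g_{zz'}$ cancel, leaving exactly the link Christoffel symbols $\Gamma^z_{\tilde z z'}(g_L)$, whereas when a $\partial_r$ lands on the warping factor one gets the characteristic cone term $\Gamma^z_{rz'}=\tfrac12 r^{-2}g_L^{z\tilde z}\,\partial_r(r^2 g_{L,z'\tilde z})=r^{-1}g_L^{z\tilde z}g_{L,z'\tilde z}=\delta^z_{z'}/r$, using $g_L^{z\tilde z}g_{L,z'\tilde z}=\delta^z_{z'}$; the only place a base derivative enters is the mixed symbol $\Gamma^z_{yz'}$, yielding $\tfrac12 g_L^{z\tilde z}\partial_y g_{L,z'\tilde z}$. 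Finally for $k=y$ only $g^{yy'}=h^{yy'}$ appears, the pure base symbols reproduce $\Gamma^y_{\tilde y y'}(h_{V_Y})$ exactly as in the unwarped case, and the only other survivor is $\Gamma^y_{zz'}=-\tfrac12 h^{y'y}\partial_{y'}(r^2 g_{L,zz'})$.

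There is no serious obstacle here; the statement is a bookkeeping exercise. The single point I would double-check, and the whole content of the verification, is the fate of the warping factor $r^2$: it cancels against $g^{z z'}=r^{-2}g_L^{z z'}$ precisely when the free index $k$ lies in the $z$-block, but it survives when $k$ lies in the $r$- or $y$-block. This is exactly why $\Gamma^r_{zz'}$ and $\Gamma^y_{zz'}$ retain explicit factors of $r$ while the intra-link symbols $\Gamma^z_{\tilde z z'}$ carry none, and keeping the distinction between the full component $g_{zz'}=r^2 g_{L,zz'}$ and the link component $g_{L,zz'}$ straight throughout is all that the computation really requires.
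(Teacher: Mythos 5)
Your proposal is correct and is essentially the paper's own proof: the paper likewise just substitutes the block-diagonal metric $g_0 = dr^2 + h_{V_Y} + r^2 g_{L_Y}(y)$ into the standard formula $\Gamma^{k}_{ij} = \frac{1}{2}\sum_l (\partial_i g_{lj}+\partial_j g_{li}-\partial_l g_{ij})g^{kl}$ and reads off the result, exactly as you do. Your write-up merely makes explicit the bookkeeping (block-diagonality of $g^{kl}$, the vanishing derivatives, and the cancellation or survival of the warping factor $r^2$) that the paper leaves to the reader.
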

\begin{proof}
	The metric $g_0$ in a local neighborhood of a singular stratum $Y$ is $g_0 = dr^2 + h_{V_Y}+r^2g_{L_Y}(y)$. Then the proposition is obtained by using the formula $\Gamma^{k}_{ij} = \frac{1}{2} \sum_l (\partial_ig_{lj}+\partial_jg_{li}-\partial_lg_{ij})g^{kl}.$
\end{proof}

We will also need the following lemma:

\begin{lem}\label{seconddifferentiallocalformula}
	Let $X$ and $g_0$ as before and let $u:\reg(X)\to\mathbb{R}$, that near a singular stratum $Y$ in local coordinates $r,y,z$, is a function of either $r$, $y$ or $z$. Then the norm of the second order covariant derivative of $u$, namely $|\nabla^2 u|_{T^{*}\otimes T^*}$ takes the form
	\begin{itemize}
		\item If $u = u(r)$, then
		\begin{align*}
		|\nabla^2 u|^2 = |\partial_r^2u|^2 + m\frac{|\partial_r u|^2}{r^2}.
		\end{align*}
		\item If $u=u(z)$, then
		\begin{align*}
		|\nabla^2 u|^2
		&= \frac{|(\nabla^{L})^2u|^2}{r^4} + 2\frac{|\nabla^{L}u|^2}{r^4} \\
		&+ 2\sum_{z,z',y,y'} g^{zz'}h^{yy'}\big[\frac{1}{2}\sum_{z_1,\tilde{z}}(\partial_yg_{z_1z})g^{\tilde{z}z_1}\frac{\partial u}{\partial\tilde{z}}\big]\big[\frac{1}{2}\sum_{z_2,\tilde{z'}}\partial_{y'}(g_{z'z_2})g^{\tilde{z'}z_2}\frac{\partial u}{\partial\tilde{z'}}\big].
		\end{align*}
		\item If $u=u(y)$, then
		\begin{align*}		
		|\nabla^2 u|^2
		& = |(\nabla^Y)^2u|^2 \\
		&+\sum_{z,\tilde{z},z',\tilde{z'}} g^{z\tilde{z}}g^{z'\tilde{z'}} \big[\sum_{y,\tilde{y}}(\partial_{\tilde{y}}g_{zz'})h^{\tilde{y}y}\frac{\partial u}{\partial y}\big]\big[\sum_{y',\tilde{y'}}(\partial_{\tilde{y'}}g_{\tilde{z}\tilde{z'}})h^{\tilde{y'}y'}\frac{\partial u}{\partial y'}\big].
		\end{align*}
	\end{itemize}
\end{lem}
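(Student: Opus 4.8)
The plan is to reduce everything to the coordinate formula for the Hessian, $(\nabla^2 u)_{ij} = \partial_i\partial_j u - \Gamma^k_{ij}\partial_k u$, together with the norm $|\nabla^2 u|^2 = g^{ii'}g^{jj'}(\nabla^2 u)_{ij}(\nabla^2 u)_{i'j'}$, and then to substitute the Christoffel symbols already computed in Lemma \ref{ChristoffelSymbols}. The computation is made tractable by two structural facts. First, the metric $g_0 = dr^2 + h_{V_Y} + r^2 g_{L_Y}$ is block diagonal in the $(r,y,z)$ split, so its inverse is block diagonal with $g^{rr}=1$, $g^{yy'}=h^{yy'}$ and $g^{zz'}=r^{-2}g_L^{zz'}$; each contraction over a link index therefore carries a factor $r^{-2}$. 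Second, because $u$ depends on only one of the coordinate groups, exactly one type of first derivative $\partial_k u$ survives, so in $(\nabla^2 u)_{ij}$ only the Christoffel symbols with that fixed upper index contribute, while the bare Hessian $\partial_i\partial_j u$ is nonzero only inside the corresponding diagonal block. I would treat the three cases in turn.

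For $u=u(r)$ only $\partial_r u$ and $\partial_r^2 u$ survive, so the only nonzero entries are $(\nabla^2 u)_{rr}=\partial_r^2 u$ (since $\Gamma^r_{rr}=0$) and $(\nabla^2 u)_{zz'}=-\Gamma^r_{zz'}\partial_r u = r\,g_{L,zz'}\partial_r u$. Contracting the first gives $|\partial_r^2 u|^2$, while contracting the second against $g^{zz'}g^{\tilde z\tilde z'}$ collapses, after the $r$-powers cancel, to $r^{-2}|\partial_r u|^2$ times the trace $\sum_{z,z'}g_L^{zz'}g_{L,zz'}=\sum_z\delta^z_z$, i.e. the number of link directions; this is exactly the multiplicity appearing in front of $|\partial_r u|^2/r^2$. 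For $u=u(z)$ the surviving entries are $(\nabla^2 u)_{rz'}=-\tfrac1r\partial_{z'}u$ (from $\Gamma^z_{rz'}=\delta^z_{z'}/r$), the mixed $(\nabla^2 u)_{yz'}=-\Gamma^z_{yz'}\partial_z u$, and $(\nabla^2 u)_{z_1 z_2}=\partial_{z_1}\partial_{z_2}u-\Gamma^z_{z_1 z_2}(g_L)\partial_z u=((\nabla^L)^2 u)_{z_1 z_2}$, where I use that the link-block Christoffels are precisely those of $g_{L_Y}$. Contracting these three against the inverse metric produces, respectively, the $2r^{-4}|\nabla^L u|^2$ term (the factor $2$ coming from the two orderings of the mixed $rz'$ pair), the $r^{-4}|(\nabla^L)^2 u|^2$ term, and the remaining $h$- and $g_L$-weighted cross term. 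The case $u=u(y)$ is entirely parallel: the only nonzero entries are $(\nabla^2 u)_{y_1 y_2}=((\nabla^Y)^2 u)_{y_1 y_2}$, recognised via $\Gamma^y_{y_1 y_2}=\Gamma^y_{y_1 y_2}(h_{V_Y})$, and $(\nabla^2 u)_{zz'}=-\Gamma^y_{zz'}\partial_y u$, and contracting them yields $|(\nabla^Y)^2 u|^2$ plus the stated $g^{z\tilde z}g^{z'\tilde z'}$-weighted term.

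I do not expect any genuine analytic difficulty here; the result is a direct, if lengthy, coordinate computation. The only place that requires care is the bookkeeping of the powers of $r$: each link-index contraction contributes $r^{-2}$, each factor $\Gamma^r_{zz'}$ contributes $r$, and each $\Gamma^z_{rz'}$ contributes $r^{-1}$, and one must check that in every case these collapse to the stated net powers ($r^{-2}$ in the radial case, $r^{-4}$ in the link case, and $r^0$ in the base case). The two identifications worth isolating as separate steps are the recognition of the intrinsic Hessians $(\nabla^L)^2 u$ and $(\nabla^Y)^2 u$ inside the $zz'$ and $y_1 y_2$ blocks, which rests on the fact from Lemma \ref{ChristoffelSymbols} that the pure link and pure base Christoffel symbols coincide with those of $g_{L_Y}$ and $h_{V_Y}$, and the evaluation of the link trace $\sum_z\delta^z_z$ that fixes the multiplicity constant in the radial case.
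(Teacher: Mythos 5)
Your proposal is correct and follows essentially the same route as the paper's own proof: expand $|\nabla^2u|^2=\sum_{i,j,k,l}g^{ik}g^{jl}\big(\partial_i\partial_j u-\sum_c\Gamma^c_{ij}\partial_c u\big)\big(\partial_k\partial_l u-\sum_d\Gamma^d_{kl}\partial_d u\big)$, use the block-diagonal structure of $g_0$ to discard all mixed inverse-metric pairings, and substitute the Christoffel symbols of Lemma \ref{ChristoffelSymbols} in each of the three cases. The one caveat is the radial case: your link-trace computation correctly yields the coefficient $\dim L$ (the number of link directions), which is not literally the $m$ written in the statement of the lemma; this mismatch is a defect of the stated constant rather than of your argument, and it is harmless for the way the lemma is used later (only to bound terms up to multiplicative constants).
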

\begin{proof}
	The proof makes use of the formula 
	\begin{align}\label{secondordernablanorm}
	|\nabla^2 u|^2 = \sum_{i,j,k,l} g^{ik}g^{jl}(\frac{\partial^2u}{\partial {x_i}\partial {x_j}}-\sum_{c} \Gamma^c_{ij}\frac{\partial u}{\partial {x_c}})(\frac{\partial^2u}{\partial {x_k}\partial {x_l}}-\sum_{d} \Gamma^d_{kl}\frac{\partial u}{\partial {x_d}})
	\end{align}
	where $(x_1,\dots,x_m)$ are local coordinates. Now, by using the fact that the metric $g_0$ is the direct sum of a warped product metric and another metric, we see that terms of the form $g^{ij}$ where either $i=r,j\in \{y_1,\dots,y_{\dim Y}\}$, either $i=r,j\in\{z_1,\dots,z_{\dim L}\}$ or $i\in \{y_1,\dots,y_{\dim Y}\}, j\in \{z_1,\dots,z_{\dim L}\}$ are cancelled. That allow us to consider only the cases when $i,k\in \{r,y,z\}$ and $j,l\in \{r,y,z\}$, which due to symmetry are only 6 cases. Then the proof consists of distinguishing the cases $u = u(r),u(z),u(y)$ and using Lemma \ref{ChristoffelSymbols} on the formula (\ref{secondordernablanorm}).
\end{proof}

The reason for employing Lemma \ref{seconddifferentiallocalformula} is that some constructions in this subsection will be of product type near the singular area and we would like to know how the first and second order covariant derivative behaves. A first application of this consideration allow us to obtain 

\begin{prop}\label{boundedpartitions}
	Let $X$ be a stratified pseudomanifold with metric $g_0$ and $\{U_\alpha\}_{\alpha\in A}$ an open cover. Then, there exists a subordinated partition of unity $\rho_\alpha$ such that
	\begin{itemize}
		\item supp($\rho_\alpha$)$\subseteq U_\alpha$.
		\item $\sum_{\alpha} \rho_\alpha = 1$.
		\item $\exists$ $C_\alpha>0$ such that for each $\alpha \in A$: $|\nabla \rho_\alpha|,\ |\nabla^2 \rho_\alpha| \leq C_\alpha$.
	\end{itemize}
\end{prop}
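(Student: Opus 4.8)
The plan is to construct the partition of unity locally and then glue, paying attention to the singular strata where the usual ``bounded gradient'' estimates for partitions of unity on a smooth compact manifold do not automatically hold because of the conical degeneration of $g_0$. The guiding idea is that on a relatively compact piece of $\reg(X)$ away from the singular set, one can use the standard smooth-manifold construction, while in a neighborhood $\pi_Y^{-1}(V_Y)$ of a singular stratum $Y$ one builds the bump functions to be of product type in the coordinates $(r,y,z)$, so that Lemma \ref{seconddifferentiallocalformula} gives direct control of $|\nabla\rho_\alpha|$ and $|\nabla^2\rho_\alpha|$.

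First I would fix, for each $\alpha$, a smooth bump function $\psi_\alpha$ with $\mathrm{supp}(\psi_\alpha)\subseteq U_\alpha$ and $\psi_\alpha>0$ on a slightly smaller set, arranged so that the family $\{\psi_\alpha>0\}$ still covers $X$; the partition is then defined in the standard way by $\rho_\alpha = \psi_\alpha/\sum_\beta\psi_\beta$, which automatically yields the first two bullet points. The content is entirely in the third bullet, the uniform bounds on $|\nabla\rho_\alpha|$ and $|\nabla^2\rho_\alpha|$. Away from the singular set these bounds are classical, so I would reduce to estimating each $\psi_\alpha$ (and hence $\rho_\alpha$, via the quotient rule, using that $\sum_\beta\psi_\beta$ is bounded below by a positive constant on the support of $\psi_\alpha$) inside a distinguished neighborhood $\pi_Y^{-1}(V_Y)$.

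In such a neighborhood I would choose $\psi_\alpha$ as a product $a(r)\,b(y)\,c(z)$ of smooth functions of the single coordinates, so that each factor is, in the terminology of Lemma \ref{seconddifferentiallocalformula}, ``a function of either $r$, $y$ or $z$.'' For a factor depending only on $r$, the formula $|\nabla^2 u|^2=|\partial_r^2 u|^2+m|\partial_r u|^2/r^2$ shows the dangerous term is $|\partial_r u|^2/r^2$; I would therefore take the $r$-factor to be \emph{constant} (equal to $1$) near $r=0$, so that its first and second $r$-derivatives vanish identically on a neighborhood of the stratum and the $1/r^2$ and $1/r^4$ weights never see a nonzero derivative. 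For the $z$- and $y$-factors, Lemma \ref{seconddifferentiallocalformula} likewise expresses $|\nabla^2 u|^2$ in terms of $|\nabla^L u|^2/r^4$, $|(\nabla^L)^2 u|^2/r^4$ and analogous quantities; on the region $r\ge \varepsilon>0$ where the $r$-factor is allowed to vary these weights are bounded, and on the region $r<\varepsilon$ the $r$-factor is constant so only the $y$-dependent pieces (whose coefficients are bounded on the compact $\overline{V_Y}$) survive. Combining the product rule with these observations gives a constant $C_\alpha$ bounding $|\nabla\psi_\alpha|$ and $|\nabla^2\psi_\alpha|$, and then the quotient estimate for $\rho_\alpha$ follows.

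The main obstacle I expect is precisely the conical weights $1/r^2$ and $1/r^4$ appearing in Lemma \ref{seconddifferentiallocalformula}: a naively chosen bump function with nonvanishing radial derivative near $r=0$ would have unbounded covariant derivatives. The device that resolves this is to force all radial variation to take place in a region $r\ge\varepsilon$ bounded away from the apex, i.e.\ to make every bump constant in $r$ near the stratum; once this is arranged, the remaining estimates are uniform because the transverse metric data ($h_{V_Y}$, $g_{L_Y}$ and their $y$-derivatives) are smooth and bounded over the relatively compact $\overline{V_Y}$. One should also check that the cover $\{U_\alpha\}$ can be refined so that each $U_\alpha$ is contained either in $\reg(X)$ away from the singular set or in a single distinguished neighborhood $\pi_Y^{-1}(V_Y)$, which is possible by local finiteness of the stratification and the local triviality in Definition \ref{iteratededgemetricdefinition}.
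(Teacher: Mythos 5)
Your overall strategy (product-type bumps adapted to the local cone structure, normalized by the sum, derivative control via Lemma \ref{seconddifferentiallocalformula}) is the natural one, and the paper gives no detailed argument to compare against: its proof of Proposition \ref{boundedpartitions} is a citation to Proposition 2.2.1 of the author's thesis. However, as written your construction has a genuine gap, and it concerns exactly the point this paper is most careful about elsewhere. You take $\psi_\alpha = a(r)\,b(y)\,c(z)$ and argue that making $a\equiv 1$ near $r=0$ neutralizes the singular weights: ``on the region $r<\varepsilon$ the $r$-factor is constant so only the $y$-dependent pieces survive.'' That inference is false. The weights $1/r^2$ and $1/r^4$ do not multiply only radial derivatives: one has $|\nabla^{g_0}u|^2 = |\partial_r u|^2 + |\nabla^h u|^2 + r^{-2}|\nabla^{g_L}u|^2$, and in the second bullet of Lemma \ref{seconddifferentiallocalformula} the terms $|(\nabla^{L})^2u|^2/r^4$ and $|\nabla^{L}u|^2/r^4$ involve $z$-derivatives. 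With $a\equiv 1$ near the stratum your bump equals $b(y)c(z)$ there, so wherever $b\neq 0$ and $\nabla^{g_L}c\neq 0$ you get $|\nabla\psi_\alpha| \geq |b|\,|\nabla^{g_L}c|/r \to \infty$ as $r\to 0$, and $|\nabla^2\psi_\alpha|$ blows up like $1/r^2$. Constancy of the radial factor kills the $\partial_r$-terms only, not the $z$-terms.

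The repair is to make the bumps independent of $z$ altogether near the stratum (take $c\equiv 1$; any localization in $z$ must be confined, together with the radial variation, to the region $r\geq\varepsilon$). This is compatible with subordination to an \emph{arbitrary} cover for a structural reason your proposal does not invoke: every open subset of $X$ containing a point $q\in Y$ contains a full conical neighborhood of the form $V\times C_\delta(L_Y)$ --- the whole link sits over each nearby $y$ --- so no $z$-localization is needed (or even meaningful) close to the stratum. This is precisely the distinction the paper draws in the remark of Section \ref{Geometry Of Simple Edge Spaces Section}: the partitions $\rho_{ij}$ used there do depend on $z\in L$ and consequently satisfy only the unbounded estimate $|\nabla^{g_0}\rho_{ij}|\leq C/r$ of (\ref{partition1/r}), whereas the partitions of Proposition \ref{boundedpartitions} are independent of the $z$-variable, which is why they are bounded. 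With $c\equiv 1$ near the strata (and, for depth greater than one, an induction on depth to handle the regions $r\geq\varepsilon$, where the link's own singularities reappear --- another point your proposal glosses over by treating $L$ as smooth), your argument goes through.
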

\begin{proof}See Proposition 2.2.1 in \cite{oikonomopoulos}.
\end{proof}

We have seen that on a local neighborhood of a singular stratum $Y$, where the metric takes the form $g_0 = h_Y + dr^2 + r^2g_L(y)$, an important role is played by the $y-$derivatives of the metric $g_{L,y}$. For this reason it is reasonable to form the following assumption, which we will state precisely when we use it:
\begin{assumption}\label{independece of y assumption}
	Let $X$ be a stratified pseudomanifold of dimension $m$. Let $g$ be an iterated edge metric, that near each singular stratum $Y$ takes the form $g = g_0 + k$ with $|k|_{g_0},\ |\nabla^g(k)|_{g_0} = O(r^\gamma)$ for some $\gamma>0$, and that $g_0 = h_Y + dr^2 + r^2g_{L_Y}$ with $g_{L_Y}:Y\times L_Y\to T^*L_Y\otimes T^*L_Y$ a smooth tensor. Then we assume that
	\begin{align*}
	g_{L_Y} \textit{\ is\ independent\ of\ } y\in Y.
	\end{align*}
\end{assumption}
With this assumption we obtain that $g_{L_Y}$ is constant along $y\in Y$ and therefore each $y-$derivative vanishes. The assumption $|\nabla(k)|_{g_0} = O(r^\gamma)$ gives the equivalence of the second order Sobolev space defined by $g$ and $g_0$ (see Lemma 3.4 in \cite{Pacini}). Therefore, when we consider the metric $g$ instead of $g_0$, Lemma \ref{seconddifferentiallocalformula} is true up to a constant and Proposition \ref{boundedpartitions} is true as it is. Now we are able to state the first main result of this section.
\begin{thm}\label{HessianCutOff}
	Let $X$ be a compact stratified pseudomanifold of dimension $m$, endowed with an iterated edge metric on $\reg(X)$ and let $k=1$ or $k=2$. Suppose that for every singular stratum $Y$ of $X$ we have the condition
	\begin{align}\label{codimcondition}
	\codim(Y)=m-i>kp,\ \text{where}\ i=\dim(Y).        
	\end{align}
	Then for $k=1, 2$, if $\depth(X)=1$, or the strata $Y$ with $\depth(Y) >1$ satisfy Assumption \ref{independece of y assumption}, $X$ admits a sequence of $(k,p)-$cut-offs.

\end{thm}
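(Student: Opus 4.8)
The plan is to construct the cut-off functions explicitly near each singular stratum as radial functions of the distance $r$ to that stratum, and to verify that the codimension condition \eqref{codimcondition} forces the relevant gradient and Hessian integrals to vanish in the limit. First I would reduce to a single stratum: using the partition of unity from Proposition \ref{boundedpartitions}, I can localize the construction to a tubular neighborhood $\pi_Y^{-1}(V_Y)\cong V_Y\times C(L_Y)$ where the metric has the product form $g_0 = h_{V_Y} + dr^2 + r^2 g_{L_Y}$, and away from the singular set the space is an ordinary manifold where cut-offs are harmless. Because the strata are finitely many (by local finiteness and compactness) and nested by depth, I expect to treat the deepest strata first and then proceed inductively, using Assumption \ref{independece of y assumption} to kill the $y$-derivatives of $g_{L_Y}$ for strata of depth greater than one so that the formulas of Lemma \ref{seconddifferentiallocalformula} simplify to their purely radial terms.

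Next I would write down the model cut-off. A standard choice is $\chi_n(r) = \psi(\tfrac{\log r}{\log(1/n)})$ or, more concretely, a function interpolating between $0$ for $r \le 1/n$ and $1$ for $r \ge 2/n$, arranged so that $|\partial_r \chi_n| \sim 1/r$ on the transition annulus and $|\partial_r^2 \chi_n| \sim 1/r^2$ there. The volume element of $g_0$ near $Y$ behaves like $r^{\dim L_Y}\,dr\,dv_{h_{V_Y}}\,dv_{g_{L_Y}}$, so that $dv \sim r^{m-i-1}\,dr$ in the radial direction, where $m-i-1 = \codim(Y)-1$. Using Lemma \ref{seconddifferentiallocalformula} with $u = \chi_n(r)$ gives $|\nabla \chi_n|^p \sim |\partial_r \chi_n|^p$ and $|\nabla^2 \chi_n|^p \sim (|\partial_r^2\chi_n|^2 + m r^{-2}|\partial_r\chi_n|^2)^{p/2}$. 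The key computation is then that
\begin{align*}
\int |\nabla^k \chi_n|^p\, dv \lesssim \int_{1/n}^{2/n} r^{-kp}\, r^{\codim(Y)-1}\, dr,
\end{align*}
and the condition $\codim(Y) > kp$ makes the integrand integrable near $r=0$ with the integral tending to zero as $n\to\infty$, since the annulus of integration collapses to the stratum.

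The main obstacle I anticipate is the $y$-dependence of the link metric for strata of higher depth, precisely the cross terms in Lemma \ref{seconddifferentiallocalformula} that involve $\partial_y g_{L_Y}$. For a purely radial cut-off these terms do not appear since $u=\chi_n(r)$ has no $y$- or $z$-derivatives, so in fact the radial construction sidesteps them; the genuine difficulty is instead the inductive/globalization step, namely ensuring that when one passes from a deep stratum to a less deep one the product structure $V_Y \times \reg(C(L_Y))$ lets the link's own cut-offs (obtained by the inductive hypothesis on $L_Y$, which has depth one less) combine consistently with the radial factor. Here Assumption \ref{independece of y assumption} is what guarantees the metric on $L_Y$ does not vary with the base point, so the inductively-constructed cut-offs on $L_Y$ glue into a single function on the neighborhood without introducing uncontrolled $y$-derivatives. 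I would take products $\chi_n^Y(r)\cdot \chi_n^{L_Y}$ of the radial cut-off with the link cut-off, verify via the product form of the metric and Remark \ref{quasiisometryremark} (passing between $g$ and $g_0$ costs only a constant) that the norms add up controllably, and finally patch everything together with the bounded partition of unity from Proposition \ref{boundedpartitions}, whose own derivatives are bounded and supported away from the collapsing annuli, hence contribute negligibly.
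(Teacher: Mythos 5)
Your proposal is correct and follows essentially the same route as the paper's proof: induction on the depth, a radial cut-off $g_n(r)=\phi(nr)$ whose $L^p$ bounds come from the condition $m-i>kp$, multiplication by a link cut-off supplied by the inductive hypothesis (with Assumption \ref{independece of y assumption} killing the $\partial_y g_{L_Y}$ cross terms of Lemma \ref{seconddifferentiallocalformula}), and a final gluing with the bounded partition of unity of Proposition \ref{boundedpartitions}. Two small points of difference: the paper controls the mixed terms $\nabla^{a}g_n\otimes\nabla^{\beta}b_n$ by coupling the link cut-offs to the radial ones, choosing $\|(\nabla^{L_Y})^{j}b_n\|_p\le 1/(nC_n)$ where $C_n$ bounds the sup-norms of the $g_n^{(j)}$ (your support-localized estimates achieve the same), and the derivatives of the partition of unity are \emph{not} in general supported away from the strata — the gluing instead works because they are bounded, $\sum_i\nabla^j\rho_i+\nabla^j\rho_{int}=0$, and dominated convergence applies.
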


\begin{rmk}
	This theorem for $k=1$ is Theorem 3.4 in \cite{parabolicityguneysubei}. Here we will repeat and expand their argument, in order to cover also the case $k=2$.
\end{rmk}

Before giving the proof of this theorem, we state and prove the following proposition, which is fundamental for our construction.
\begin{prop}\label{reallinecutoff}
	Let $p\in [1,\infty)$, $k\in\mathbb{N}$ and $m-i> kp$ for some integers $m>i\geq 0$. Then, there exists a sequence of functions $\{g_n\}\subseteq C^\infty_c\big((0,2]\big)$, $n\in\mathbb{N}$ with the following properties
	\begin{itemize}
		\item $\forall n\in\mathbb{N}$ we have $0\leq g_n\leq 1$.
		\item For every $K\subset\subset (0,2]$, $\exists\ n_0\in \mathbb{N}$ such that $\forall n\geq n_0$ we have $g_{n}|_{K}=1$.
		\item For $1\leq j \leq k$ we have that $\int_0^2 |g_n^{(j)}(r)|^pr^{m-i-1}dr\to 0$, as $n\to\infty$.
	\end{itemize}
\end{prop}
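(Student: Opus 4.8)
The plan is to produce the cut-offs by rescaling a single fixed model profile, exploiting the fact that the hypothesis $m-i>kp$ is precisely the condition making the relevant scaling exponent negative. First I would fix once and for all a function $\chi\in C^\infty([0,\infty))$ with $0\le\chi\le1$, $\chi\equiv0$ on $[0,1]$ and $\chi\equiv1$ on $[2,\infty)$, and then set
\begin{align*}
g_n(r):=\chi(nr),\qquad r\in(0,2],\ n\in\mathbb{N}.
\end{align*}
By construction $0\le g_n\le1$, and $g_n$ vanishes on $(0,1/n]$ while equalling $1$ on $[2/n,2]$, so $g_n\in C_c^\infty\big((0,2]\big)$ with support $[1/n,2]$. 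This immediately settles the first bullet.

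For the second bullet, note that any compact $K\subset\subset(0,2]$ satisfies $K\subseteq[\delta,2]$ for some $\delta>0$; choosing $n_0\ge 2/\delta$ guarantees $nr\ge n\delta\ge2$ for all $r\in K$ and $n\ge n_0$, whence $g_n|_K=\chi(nr)=1$.

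The heart of the matter is the third bullet, and the key step is a change of variables. Since $g_n^{(j)}(r)=n^j\chi^{(j)}(nr)$, substituting $s=nr$ gives
\begin{align*}
\int_0^2|g_n^{(j)}(r)|^p r^{m-i-1}\,dr
= n^{jp-(m-i)}\int_1^2|\chi^{(j)}(s)|^p s^{m-i-1}\,ds,
\end{align*}
where I used that $\chi^{(j)}$ is supported in $[1,2]$ for $j\ge1$. The integral on the right is a finite constant $C_j$ independent of $n$, so the whole expression equals $C_j\, n^{jp-(m-i)}$. For each $j$ with $1\le j\le k$ we have $jp\le kp<m-i$, hence the exponent $jp-(m-i)$ is strictly negative and the integral tends to $0$ as $n\to\infty$, which is exactly property three.

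The only place any care is needed is in recognising that the strict inequality $m-i>kp$ is precisely what makes every exponent $jp-(m-i)$ negative, so I do not expect a genuine obstacle here. It is worth remarking that in the borderline case $m-i=kp$ this single-scale cut-off fails (the top exponent vanishes and $C_k\,n^0\not\to0$), and one would instead need a logarithmically spread cut-off of the type $g_n(r)=\chi\big(\log r/\log a_n\big)$ to distribute the derivative over many scales; the strictness of the hypothesis is exactly what lets us avoid this refinement.
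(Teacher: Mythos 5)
Your proposal is correct and is essentially the paper's own proof: both rescale a fixed smooth step profile via $g_n(r)=\chi(nr)$ (the paper calls it $\phi$, with transition on $[1,3/2]$ instead of $[1,2]$), verify the first two properties directly, and conclude from $jp\le kp<m-i$ that the derivative terms scale like $n^{jp-(m-i)}\to 0$. The only cosmetic difference is that you evaluate the integral exactly by the substitution $s=nr$, while the paper bounds $|\phi^{(j)}|$ by a constant and estimates the integral over $[1/n,3/(2n)]$; your closing remark on the borderline case $m-i=kp$ is a nice observation but not needed for the statement.
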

\begin{proof} Let $\phi\in C^\infty_c\big((0,2]\big)$ such that $0\leq\phi\leq 1$, $\phi(r)=0$ for $0\leq r\leq 1$ and $\phi(r) = 1$ for $3/2\leq r\leq 2$. Then for $n\in\mathbb{N}$ define $g_n:(0,2]\to\mathbb{R}$ by $g_n(r) = \phi(nr)$. It is straightforward to verify the first two properties. For $1\leq j\leq k$ we have
	\begin{align*}
	\int_0^2 |g_n^{(j)}(r)|^pr^{m-i-1}dr 
	&= \int_0^2 |\phi^{(j)}(nr)|^pn^{jp}r^{m-i-1}dr\\
	&\leq C_{j} \int_{1/n}^{3/2n} n^{jp}r^{m-i-1}dr \\
	&\leq \frac{C_{j}}{m-i} \big(\big(\frac{3}{2}\big)^{m-i}-1\big) n^{jp}\big(\frac{1}{n}\big)^{m-i}.
	\end{align*}
	Since $m-i>kp\geq jp$, this converges to 0 as $n\to \infty$.

\end{proof}

Now we can give the proof of the theorem
\begin{proof}
	The proof goes by induction on the depth of the stratified pseudomanifold $X$ and a partition of unity argument. Let $X$ be a stratified pseudomanifold of depth $l\in\mathbb{N}$ and dimension $m$, $Y$ a singular stratum of dimension $i$ and $p\in Y$. Then by definition, there exists $U\subseteq X$, $V_Y\subseteq Y$ and an isometry 
	\begin{align*}
	\phi:U\to V_Y\times C(L_Y),
	\end{align*}
	where $C(L_Y)$ is the cone over a stratified pseudomanifold $L_Y$, with depth $\leq l-1$. The variables in $V_Y\times C(L_Y)$ are $y,r,z$ respectively.

	\begin{itemize}
		\item If $\depth(X) = 1$, $L_Y$ has depth 0 and it is a compact manifold without boundary. Then trivially, $b_n(y,r,z)=b_n(z)=1$ is a $(k,p)-$cut-off for $L_Y$. We then define $g_n(y,r,z)=g_n(r)$ and set $\chi_n = g_nb_n$. It is easy to see that $\{\chi_n\}$ is a $(k,p)-$cut-off in the neighborhood $V_Y\times C(L_Y)$. So the theorem, after gluing with a suitable partition of unity as done below, is proved for stratified pseudomanifolds of depth 1.
		\item Suppose now that the theorem holds for all stratified pseudomanifolds of $\depth<l$ and $\depth(X)=l$. Then $\depth(L_Y)\leq l-1$ and let $b_n(y,r,z)=b_n(z)$ be a $(k,p)-$cut-off for $L_Y$. As we can see from Proposition \ref{reallinecutoff}, for $n\in\mathbb{N},\ j=1,\dots,k:$ $|g_n^{(j)}|_\infty\leq C_{n,j}<\infty$. Set $C_n = \max_{j=1,\dots,k} C_{n,j}$. Thus w.l.o.g. we can choose $b_n$ in such a way, that for $j=1,\dots,k$ we have
		\begin{align*}
		\|\big(\nabla^{L_Y}\big)^{(j)}b_n\|_p \leq \frac{1}{nC_{n}}.
		\end{align*}
		We then set $\chi_n = g_nb_n$ and easily see that
		\begin{itemize}
			\item $0\leq\chi_n\leq 1$.
			\item For every $K\subset\subset \reg(V_Y\times C(L_Y))$, $\exists\ n_0$ such that $\forall n\geq n_0$ we have $\chi_{n}|_{K}=1$.
		\end{itemize}
		
		For $j=1,\dots,k$, we have
		
		\begin{align*}
		\|\nabla^j\chi_n\|_{L^p(U)} 
		&= \|\nabla^j(g_nb_n)\|_{L^p(U)} \\
		&\leq\sum_{a=0}^{j}C_a\|\nabla^{j-a}g_n\nabla^ab_n\|_{L^p(U)}.
		\end{align*}
		
		Since $k=1,2$ we have to check the cases $j=1,2$. For $j=1$, we obtain the terms $\|\nabla(g_n)b_n\|,\ \|g_n\nabla(b_n)\|$, and for $j=2$ the terms $\|\nabla^2(g_n)b_n\|,\ \|\nabla(g_n)\otimes\nabla(b_n)\|,\ \|g_n\nabla^2(b_n)\|$. By looking at Lemma \ref{seconddifferentiallocalformula}, we see that each of these terms is bounded by terms of the form
		\begin{align*}
		\|\frac{g^{(\alpha)}(r)}{r^{2-\alpha}}(\nabla^{L_Y})^{\beta}b_n(z)\|_{L^p},
		\end{align*}
		where $\alpha,\beta\in\{0,1,2\}$ with $1\leq\alpha+\beta\leq 2$. If $\beta=0$ the term is bounded by  
		\begin{align*}
		\big(\vol(V_Y)\vol(L_Y)\int_{\frac{1}{n}}^{\frac{3}{2n}} n^{\alpha p}r^{m-i-1-(2-\alpha)p}dr\big)^{1/p}\to 0 \ \text{as\ } n\to\infty,
		\end{align*}
		since $m-i>2p$. If $\beta>0$, then the term is bounded by
		\begin{align*}
		\vol(V_Y)^{1/p} |g_n^{(\alpha)}|_{\infty} \big(\int_0^2 r^{m-i-1-(2-\alpha)p}dr\big)^{1/p} \|\big(\nabla^{L_Y}\big)^{\beta}b_n\|_{L^p(L_Y)}\to 0
		\end{align*}	
		as $n\to\infty$, since $m-i>2p\geq (2-\alpha)p$, and $g_n^{(\alpha)}\leq C_n$.
	\end{itemize}
	Thus, $\{\chi_n\} =\{g_nb_n\}$ is a sequence of $(k,p)-$cut-offs in the neighborhood $U=V_Y\times C(L_Y)$.		
	Finally, the sequence of functions $\{\tilde{\chi_n}\}$ defined by
	
	\begin{align*}
	\tilde{\chi}_n = \sum_{i} \rho_i\chi_{n,U_i} + \rho_{int},
	\end{align*}
	is a sequence of $(k,p)-$cut-offs. In order to see this, we take $1\leq j\leq k$ and we calculate:
	\begin{align*}
	\|\nabla^j \tilde{\chi}_n\|_{L^p}
	\leq& \|\sum_{i}\nabla^j(\rho_i) \chi_{n,U_i} + \nabla^j\rho_{int}\|_{L^p} \\
	+ & \|\sum_{i,\ a+\beta\leq j,\ \beta>0} C_{a,\beta} \nabla^a(\rho_i)\nabla^\beta(\chi_{n,U_i})\|_{L^p}.
	\end{align*}
	In view of Proposition \ref{boundedpartitions}, the first term converges locally uniformly to $0$ as $n\to\infty$ and it is bounded since $\sum_{i}\nabla^j(\rho_i) \chi_{n,U_i} + \nabla^j\rho_{int}$ is bounded and $L^p$-integrable. Thus by Lebesgue's theorem we conclude that it converges to $0$. For the second term, we use that $\chi_{n,U_i}$ is a $(k,p)-$cut-off, and that $\nabla^a \rho_i$ is bounded, thus it also converges to $0$.
\end{proof}

\subsection{A Density Theorem for $W^{1,p}(X)$}
Now, by using the construction of the weak cut-off functions, we are able to prove a density result about the Sobolev spaces $W^{1,p}(\reg(X))$. Before stating the precise result we prove the following intermediate proposition:

\begin{prop}\label{boundedapproximation in W^1p}
	Let $(M,g)$ be a Riemannian manifold and $p\in [1,\infty)$. Then the space $\{u\in W^{1,p}(M):\ \|u\|_\infty <\infty\}$ is dense in $W^{1,p}(M)$ in the $\|\cdot\|_{W^{1,p}}$-norm.
\end{prop}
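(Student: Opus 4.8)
The goal is to approximate an arbitrary $u \in W^{1,p}(M)$ by bounded Sobolev functions. The natural tool is truncation: for a level $\ell > 0$, I would compose $u$ with a smooth, bounded, globally Lipschitz function $T_\ell : \mathbb{R} \to \mathbb{R}$ that equals the identity on $[-\ell, \ell]$ and levels off to constants $\pm(\ell+1)$ outside $[-(\ell+1), \ell+1]$, with $0 \le T_\ell' \le 1$ everywhere. Then $u_\ell := T_\ell \circ u$ is bounded (indeed $\|u_\ell\|_\infty \le \ell + 1$), and the plan is to show $u_\ell \to u$ in $W^{1,p}$ as $\ell \to \infty$.

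\emph{The plan is to proceed in two steps.} First I would establish that $u_\ell \in W^{1,p}(M)$ with the chain rule $\nabla u_\ell = (T_\ell' \circ u)\,\nabla u$ holding distributionally. For Lipschitz $T_\ell$ this is the standard chain rule for Sobolev functions; since $|T_\ell' \circ u| \le 1$, we get $|\nabla u_\ell| \le |\nabla u| \in L^p$, and $|u_\ell| \le |u| + 1$ is in $L^p$ on any finite-measure piece — but here $M$ need not be compact, so I would instead note $|T_\ell(t) - t|$ is bounded and supported where $|t| > \ell$, giving $|u_\ell| \le |u|$ pointwise up to the bounded correction, so $u_\ell \in L^p$ follows from $u \in L^p$. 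Second, I would prove convergence. For the function part, $u_\ell - u$ is supported on $\{|u| > \ell\}$ and dominated by $2|u| \in L^p$; since $\{|u| > \ell\}$ shrinks to a null set as $\ell \to \infty$, dominated convergence gives $\|u_\ell - u\|_p \to 0$. For the gradient part, $\nabla u_\ell - \nabla u = (T_\ell' \circ u - 1)\nabla u$, and $T_\ell' \circ u - 1$ vanishes on $\{|u| \le \ell\}$ while $|(T_\ell' \circ u - 1)\nabla u| \le |\nabla u| \in L^p$; again dominated convergence yields $\|\nabla u_\ell - \nabla u\|_p \to 0$.

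The main obstacle, and the only genuinely technical point, is justifying the distributional chain rule for the composition of a Lipschitz function with a Sobolev function on a general Riemannian manifold, together with confirming that the truncated function lands in $W^{1,p}$ rather than merely in $L^p$ with bounded gradient. On $\mathbb{R}^n$ this is classical (Stampacchia's lemma); to transfer it to $(M,g)$ I would invoke the Meyers--Serrin density of $W^{1,p}(M) \cap C^\infty(M)$ from Proposition \ref{Meyers-Serrin}, approximating $u$ by smooth $W^{1,p}$ functions $v_j$, applying the smooth chain rule to $T_\ell \circ v_j$ where it is elementary, and passing to the limit using the uniform Lipschitz bound on $T_\ell$ to control $\|T_\ell \circ v_j - T_\ell \circ u\|_{W^{1,p}}$. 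One mild care point is that $T_\ell \circ v_j$ need not be compactly supported, but since $T_\ell(0) = 0$ this composition preserves membership in $W^{1,p}$, and the argument is purely local in character so no global geometry of $M$ intervenes.

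Altogether the scheme is: fix the truncation family $\{T_\ell\}$, establish $u_\ell \in W^{1,p}$ via the chain rule, then send $\ell \to \infty$ and apply dominated convergence twice. I expect the density statement to follow cleanly once the chain rule is in hand, as every dominating function is simply $|u|$ or $|\nabla u|$, both in $L^p$ by hypothesis.
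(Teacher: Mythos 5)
Your proof is correct, and it is the same basic strategy as the paper's---truncation at levels tending to infinity followed by dominated convergence---but the technical device differs in a way worth noting. The paper first invokes Proposition \ref{Meyers-Serrin} to replace $u$ by a smooth function, then applies the \emph{hard} truncation $u_n = \max(-a_n,\min(u,a_n))$ at \emph{regular values} $a_n\to\infty$ of $u$; Sard's theorem supplies such values, and regularity of the level sets $\{|u|=a_n\}$ is what makes the piecewise-defined gradient legitimate, so no Sobolev chain rule is ever needed. You instead keep $u$ as a general $W^{1,p}$ function and smooth the truncation itself, which forces you to justify the chain rule $\nabla(T_\ell\circ u)=(T_\ell'\circ u)\nabla u$; your justification (approximate $u$ by smooth $v_j$ via Meyers--Serrin, apply the classical chain rule to $T_\ell\circ v_j$, and pass to the limit using $|T_\ell'|\le 1$ and a.e.\ convergence along a subsequence) is sound and makes the argument self-contained without Sard. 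One cosmetic slip: the family $T_\ell$ as literally specified cannot exist, since $T_\ell(\ell)=\ell$, $0\le T_\ell'\le 1$, and smoothness force $T_\ell(\ell+1)<\ell+1$, so $T_\ell$ cannot plateau at the value $\ell+1$ by the point $\ell+1$; either widen the transition band (say to $[\ell,\ell+2]$) or let the plateau value lie strictly between $\ell$ and $\ell+1$. Nothing downstream depends on this choice, as your estimates only use $T_\ell(0)=0$, $|T_\ell'|\le 1$, and $T_\ell=\mathrm{id}$ on $[-\ell,\ell]$.
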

\begin{proof}
	Let $u\in W^{1,p}(M)$. By Proposition \ref{Meyers-Serrin} we can assume that $u\in C^\infty(M)$. Then, let $a_n \to \infty$ be regular values of $u$ and define $u_n = \max (-a_n,\min(u,a_n))$. Then $u_n \in W^{1,p}(M)$ and it is easily seen that $\|u_n-u\|_{W^{1,p}}\to 0$ as $n\to\infty$.
\end{proof}
Therefore, we have the following Theorem:

\begin{thm}\label{densitytheorem}
	Let $X$ be a compact stratified pseudomanifold of dimension $m$, endowed with an iterated edge metric on $\reg(X)$. Suppose that for every singular stratum $Y$ of $X$ we have the condition
	\begin{align}\label{codimcondition}
	\codim(Y)=m-i>p,\ \text{where}\ i=\dim(Y).        
	\end{align}
	Then, 
	\begin{align*}
	W^{1,p}(\reg(X)) = W^{1,p}_0(\reg(X)).
	\end{align*}
\end{thm}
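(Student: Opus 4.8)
The goal is to prove that $W^{1,p}(\reg(X)) = W^{1,p}_0(\reg(X))$, i.e. that compactly supported smooth functions are dense in the full Sobolev space. The plan is to exploit the sequence of $(1,p)$-cut-offs furnished by Theorem \ref{HessianCutOff}: the hypothesis $\codim(Y) = m-i > p$ for every singular stratum is precisely condition \eqref{codimcondition} with $k=1$, so such a sequence $\{\chi_n\} \subseteq C_c^\infty(\reg(X))$ exists, satisfying $0 \le \chi_n \le 1$, eventual constancy equal to $1$ on compacts, and $\|\nabla \chi_n\|_p \to 0$.

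First I would reduce to approximating bounded functions. Since $W^{1,p}_0$ is by definition a closed subspace (a completion inside $W^{1,p}$), it suffices to show a dense subset of $W^{1,p}$ lies in $W^{1,p}_0$. By Proposition \ref{boundedapproximation in W^1p} the bounded functions $\{u \in W^{1,p} : \|u\|_\infty < \infty\}$ are dense, and by the Meyers--Serrin Proposition \ref{Meyers-Serrin} we may further assume $u \in C^\infty(\reg(X))$ with $\|u\|_\infty < \infty$. So I would fix such a $u$ and produce compactly supported smooth approximants. The natural candidate is the truncation $u_n := \chi_n u$. Each $u_n$ is smooth (product of smooth functions) and has compact support in $\reg(X)$, since $\chi_n \in C_c^\infty$; hence $u_n \in C_c^\infty(\reg(X)) \subseteq W^{1,p}_0(\reg(X))$.

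It then remains to check $\|u_n - u\|_{W^{1,p}} \to 0$. I would split this into the $L^p$ part and the gradient part. For the function values, $\|u_n - u\|_p = \|(\chi_n-1)u\|_p$, and since $0 \le \chi_n \le 1$ with $\chi_n \to 1$ pointwise on $\reg(X)$ (by eventual constancy on compact exhaustions), while $|(\chi_n-1)u|^p \le |u|^p \in L^1$, dominated convergence gives convergence to $0$. For the gradient, the Leibniz rule yields $\nabla u_n - \nabla u = (\chi_n - 1)\nabla u + u\,\nabla \chi_n$. The first summand tends to $0$ in $L^p$ by the same dominated-convergence argument applied to $|\nabla u|^p$. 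The second summand is controlled using the boundedness of $u$: $\|u\,\nabla\chi_n\|_p \le \|u\|_\infty \|\nabla\chi_n\|_p \to 0$ by the defining cut-off property. Collecting the estimates gives $\|u_n - u\|_{W^{1,p}} \to 0$.

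The main obstacle, and the reason boundedness of $u$ is introduced via Proposition \ref{boundedapproximation in W^1p}, is precisely the cross term $u\,\nabla\chi_n$: the cut-off theorem only guarantees $\|\nabla\chi_n\|_p \to 0$, not any pointwise or weighted control, so one cannot absorb an unbounded $u$ against it without a Hardy-type inequality. Factoring out $\|u\|_\infty$ sidesteps this cleanly, at the cost of the preliminary reduction to bounded $u$. I expect the verification itself to be routine once the cut-offs are in hand; the conceptual content is entirely in Theorem \ref{HessianCutOff}, whose codimension hypothesis is what makes $\|\nabla\chi_n\|_p$ small. A minor point to state carefully is that $\chi_n \to 1$ pointwise almost everywhere on $\reg(X)$, which follows from exhausting $\reg(X)$ by compact sets and the eventual-constancy property of the cut-offs.
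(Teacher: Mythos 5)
Your proposal follows the paper's proof almost verbatim in its core: reduce to bounded $u$ via Proposition \ref{boundedapproximation in W^1p}, multiply by the $(1,p)$-cut-offs $\chi_n$ supplied by Theorem \ref{HessianCutOff}, and estimate via the Leibniz rule, with the two ``easy'' terms handled by dominated convergence and the cross term by $\|u\,\nabla\chi_n\|_p\leq\|u\|_\infty\|\nabla\chi_n\|_p\to 0$. All of that is exactly the paper's argument and is correct. The one place you deviate is the initial reduction, and that is where there is a genuine (though fixable) gap. You claim that by combining Proposition \ref{boundedapproximation in W^1p} with Meyers--Serrin (Proposition \ref{Meyers-Serrin}) one may assume $u$ is \emph{simultaneously} smooth and bounded. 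The two density statements do not compose to give density of the intersection $C^\infty\cap L^\infty\cap W^{1,p}$: the approximants produced by Proposition \ref{boundedapproximation in W^1p} are truncations $\max(-a_n,\min(u,a_n))$, which are not smooth, and Meyers--Serrin applied to a bounded function is not asserted, at the level of its statement, to produce \emph{bounded} smooth approximants. So ``we may further assume $u\in C^\infty(\reg(X))$ with $\|u\|_\infty<\infty$'' needs its own argument --- e.g.\ noting that the mollification in the Meyers--Serrin construction does not increase the sup norm, or replacing the hard truncation by a smooth one $F_n\circ u$, with $F_n$ smooth, $F_n(t)=t$ for $|t|\leq n$, $|F_n(t)|\leq |t|$, $|F_n'|\leq 1$.

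The reason you want smoothness up front is to conclude $\chi_n u\in C^\infty_c(\reg(X))$ directly. The paper sidesteps the issue by postponing smoothing to the very end: it takes $u$ merely bounded and in $W^{1,p}$, observes that $u_n=\chi_n u$ is a $W^{1,p}$ function with compact support inside the smooth open manifold $\reg(X)$, and only then approximates $u_n$ by $C^\infty_c(\reg(X))$ functions by mollifying in finitely many charts covering the support --- a step that is completely standard and requires no control of $\|u\|_\infty$ beyond what was already used. So your argument becomes correct either by adopting that final mollification step in place of the up-front smoothness assumption, or by properly justifying the density of bounded smooth functions; as written, the reduction does not follow from the two results you cite.
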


\begin{proof}
	The inclusion $W^{1,p}_0(\reg(X))\subseteq W^{1,p}(\reg(X))$ is obvious. For the converse, let $u\in W^{1,p}(\reg(X))$. In virtue of Proposition \ref{boundedapproximation in W^1p}, we can assume that $u \in  L^\infty(X)\cap W^{1,p}(\reg(X))$. Since $m-i\geq p$, we obtain from Theorem \ref{HessianCutOff}, a sequence $\{\chi_n\}$ of $(1,p)-$cut-offs. We set $u_n = \chi_n u$ and we calculate:
	\begin{align*}
	\|u-u_n\|_{W^{1,p}} 
	&= \|u-u_n\|_{L^p} + \|\nabla(u)-\chi_n\nabla(u)-\nabla(\chi_n)u\|_{L^p} \\ 
	&\leq \|u-u_n\|_{L^p} + \|\nabla(u)-\chi_n\nabla(u)\|_{L^p}+\|\nabla(\chi_n)u\|_{L^p}.
	\end{align*}
	The first two terms converge to $0$ by Lebesgue's Theorem, and the last term is bounded by $\|\nabla(\chi_n)\|_{L^p} \cdot \|u\|_{L^\infty}\to 0$ as $n\to\infty$ since $\chi_n$ is a $(1,p)-$cut-off and $u$ is bounded. We see that $u_n\in W^{1,p}(\reg(X))$ and has compact support in $\reg(X)$. Therefore approximating with functions in $C^\infty_c(\reg(X))$ (for example by mollifying) concludes the proof.
\end{proof}

\section{A Hardy Inequality}

In this section, we prove a Hardy-type inequality for simple edge spaces. Hardy inequality playes a crucial role in proving the validity of the Sobolev embedding which we will show in the next section. We begin, by proving a weighted Hardy inequality on the real half-line. More precisely, we have: 

\begin{prop}\label{HardyInequality}
	Let $p\geq1$, $f\in \mathbb{N}$ with $p\neq f+1$. Then for every $u\in C^\infty_c((0,+\infty))$ we have
	\begin{align}
	\int_0^\infty \frac{|u(r)|^p}{r^p} r^{f}dr \leq \bigg|\frac{p}{f+1-p}\bigg|^p \int_0^\infty |\partial_ru|^pr^{f}dr.
	\end{align}
\end{prop}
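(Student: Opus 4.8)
The plan is to establish this weighted Hardy inequality through an integration by parts argument, exploiting the compact support of $u$ to kill boundary terms. The key observation is that the left-hand side involves $|u(r)|^p r^{f-p}$, and differentiating the antiderivative of $r^{f-p}$ brings down exactly the weight we want, which suggests integrating the factor $r^{f-p}$ against the derivative of $|u|^p$.

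Concretely, I would first write $\int_0^\infty |u(r)|^p r^{f-p}\,dr$ and integrate by parts, using that $\frac{d}{dr}\big(\frac{r^{f-p+1}}{f+1-p}\big) = r^{f-p}$, which is legitimate precisely because $p \neq f+1$ ensures $f+1-p \neq 0$. Since $u \in C^\infty_c((0,+\infty))$, both boundary terms vanish: near $r=0$ the support condition forces $u$ to vanish identically on a neighborhood of the origin, and near $r=\infty$ compact support does the same. This yields
\begin{align*}
\int_0^\infty |u(r)|^p r^{f-p}\,dr = -\frac{1}{f+1-p}\int_0^\infty \frac{d}{dr}\big(|u(r)|^p\big)\, r^{f-p+1}\,dr.
\end{align*}
Next I would compute $\frac{d}{dr}|u|^p = p|u|^{p-1}\,\mathrm{sgn}(u)\,\partial_r u$ (handling $p=1$ and the set where $u=0$ with the usual care, or by first assuming $u \geq 0$ via the standard reduction $|u| \leq v$ with $v = |u|$ smooth where $u \neq 0$), so the right-hand side becomes $\frac{p}{|f+1-p|}\int_0^\infty |u|^{p-1}|\partial_r u|\, r^{f-p+1}\,dr$ after taking absolute values.

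The decisive step is then to apply Hölder's inequality to this last integral with conjugate exponents $p/(p-1)$ and $p$. I would split the integrand by distributing the weight $r^{f-p+1} = r^{(f-p)(p-1)/p}\cdot r^{f/p}$, pairing $|u|^{p-1} r^{(f-p)(p-1)/p}$ with the first factor and $|\partial_r u|\, r^{f/p}$ with the second. Hölder then produces $\big(\int_0^\infty |u|^p r^{f-p}\,dr\big)^{(p-1)/p} \big(\int_0^\infty |\partial_r u|^p r^f\,dr\big)^{1/p}$. Denoting the target left-hand side by $I$, the inequality reads $I \leq \frac{p}{|f+1-p|} I^{(p-1)/p}\big(\int |\partial_r u|^p r^f\big)^{1/p}$, and dividing by $I^{(p-1)/p}$ (which is harmless when $I$ is finite and positive, and trivial otherwise) and raising to the $p$-th power yields exactly the claimed constant $|p/(f+1-p)|^p$.

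The main obstacle I anticipate is not the computation but the technical justification of differentiating $|u|^p$ and dividing by $I^{(p-1)/p}$: when $p=1$ the exponent $p-1$ vanishes and the Hölder step degenerates, so I would treat $p=1$ separately (there the inequality follows directly after integration by parts without Hölder). For $p > 1$, the potential non-smoothness of $|u|^p$ at zeros of $u$ is resolved by approximating $|u|$ by $\sqrt{u^2 + \varepsilon^2}$ and passing to the limit, or by noting $|u|^p$ is $C^1$ when $p > 1$. Finally, one should confirm $I < \infty$ a priori, which holds since $u$ is smooth with compact support in $(0,\infty)$, keeping the weight $r^{f-p}$ bounded on the support away from the origin.
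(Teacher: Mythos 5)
Your proposal is correct and follows essentially the same route as the paper's proof: integrate by parts against the antiderivative $r^{f+1-p}/(f+1-p)$ (using $p\neq f+1$ and compact support to kill boundary terms), then for $p>1$ apply H\"older's inequality with the identical weight splitting $r^{f+1-p}=r^{\frac{f(p-1)}{p}+(1-p)}\cdot r^{\frac{f}{p}}$, absorb the factor $I^{1-\frac{1}{p}}$, and raise to the $p$-th power. Your additional care about the $p=1$ case, the differentiability of $|u|^p$, and the finiteness of $I$ only makes explicit what the paper leaves implicit.
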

\begin{proof}
	We have
	\begin{align*}
	\int_0^\infty \frac{|u|^p}{r^p} r^{f}dr = 
	&\int_0^\infty |u|^p\frac{(r^{f+1-p})'}{f+1-p}dr\\
	&=-\frac{p}{f+1-p}\int_0^\infty |u|^{p-1}\text{sgn}(u)(\partial_ru)r^{f+1-p}dr \\
	& \leq\bigg|\frac{p}{f+1-p}\bigg|\int_0^\infty |u|^{p-1}|\partial_ru|r^{f+1-p}dr.
	\end{align*}
	For $p=1$ the statement has been proved. For $p>1$ we split $r^{f+1-p} =  r^{\frac{f(p-1)}{p}+(1-p)}\cdot r^{\frac{f}{p}}$
	and apply H{\"o}lder Inequality with $\frac{p-1}{p}+\frac{1}{p}=1$. Then we obtain
	\begin{align*}
	\int_0^\infty \frac{|u|^p}{r^p} r^{f}dr \leq \frac{p}{|f+1-p|} \bigg(\int_0^\infty \frac{|u|^p}{r^p} r^{f}dr\bigg)^{1-\frac{1}{p}}  \bigg(\int_0^\infty |\partial_ru|^pr^{f}dr\bigg)^{\frac{1}{p}}.
	\end{align*}
	Taking powers of $p$ finishes the proof.
\end{proof}
Using the above proposition, we easily obtain a Hardy-type inequality for model simple edge spaces. More precisely we have
\begin{prop}\label{HardyCone}
	Let $(L,g_L)$, $(Y,h)$ be manifolds of dimension $f,\ d$ respectively, $1\leq p<\infty$ with $p\neq f+1$ and let $Y\times C(L)=Y\times(0,+\infty)\times L$ with metric $g_0=h+dr^2+r^2g_L$, where $g_L:Y\times L\to T^*L\otimes T^*L$ is a smooth tensor that restricts to a Riemannian metric on each $y\in Y$. Then for $u\in C^\infty_c(Y\times C(L))$ one has 
	\begin{align*}
	\int_{Y\times C(L)} \frac{|u|^p}{r^p}dvol_{g_0} \leq\bigg|\frac{p}{f+1-p}\bigg|^p \int_{Y\times C(L)} |\nabla^{g_0} u|^pdvol_{g_0}.
	\end{align*}
\end{prop}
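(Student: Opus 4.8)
The plan is to reduce this inequality on the model edge $Y\times C(L)$ to the one-dimensional weighted Hardy inequality of Proposition \ref{HardyInequality} by slicing along the radial direction, taking full advantage of the warped-product structure of $g_0 = h + dr^2 + r^2 g_L$. First I would record the two structural facts that make the reduction work. Since $g_0$ is block diagonal in the blocks $TY$, $\partial_r$ and $TL$, its determinant factorises and the Riemannian volume element takes the form
\[
dvol_{g_0} = r^{f}\,\sqrt{\det h}\,\sqrt{\det g_L(y)}\; dy\, dr\, dz,
\]
so that the radial weight is \emph{exactly} $r^{f}$, matching the exponent in Proposition \ref{HardyInequality} and its admissibility condition $p\neq f+1$. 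For the same block-diagonal reason one has $g_0^{rr}=1$ with no cross terms involving $r$, and therefore
\[
|\nabla^{g_0}u|^2 = |\partial_r u|^2 + (\text{nonnegative }Y\text{- and }L\text{-contributions}) \geq |\partial_r u|^2 .
\]

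Next I would fix $(y,z)\in Y\times L$ and apply the one-dimensional inequality to the slice $r\mapsto u(y,r,z)$. The factors $\sqrt{\det h}$ and $\sqrt{\det g_L(y)}$ are constant in $r$, so they pull out of the radial integral, and Proposition \ref{HardyInequality} with weight $r^{f}$ yields, for each fixed $(y,z)$,
\[
\int_0^\infty \frac{|u(y,r,z)|^p}{r^p}\, r^{f}\, dr \leq \bigg|\frac{p}{f+1-p}\bigg|^p \int_0^\infty |\partial_r u(y,r,z)|^p\, r^{f}\, dr .
\]
Multiplying through by the $r$-independent densities and integrating over $(y,z)$ against $\sqrt{\det h}\,\sqrt{\det g_L(y)}\, dy\, dz$ — which is legitimate by Tonelli's theorem, all integrands being nonnegative and measurable — recombines the weights back into $dvol_{g_0}$ and gives
\[
\int_{Y\times C(L)} \frac{|u|^p}{r^p}\, dvol_{g_0} \leq \bigg|\frac{p}{f+1-p}\bigg|^p \int_{Y\times C(L)} |\partial_r u|^p\, dvol_{g_0}.
\]
Finally, dominating $|\partial_r u|\leq |\nabla^{g_0}u|$ by the gradient estimate above and inserting it into the right-hand side produces the claimed bound.

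The only genuine point requiring care — the ``main obstacle,'' though a mild one — is verifying that the slices are admissible test functions for the one-dimensional inequality. Since $u\in C^\infty_c(Y\times C(L))$ and $Y\times C(L)=Y\times(0,\infty)\times L$, the support of $u$ is a compact subset of $Y\times(0,\infty)\times L$ whose radial projection lies in some interval $[a,b]\subset(0,\infty)$ bounded away from the cone tip; hence for every fixed $(y,z)$ the slice $r\mapsto u(y,r,z)$ lies in $C^\infty_c((0,\infty))$, so Proposition \ref{HardyInequality} genuinely applies and no boundary terms are lost. I would also note that the $y$-dependence of $g_L$ is harmless precisely because we slice in $r$: the associated density is a fixed constant during each radial application of Hardy and is only reintroduced at the integration step.
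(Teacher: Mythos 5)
Your proof is correct and follows essentially the same route as the paper: factor the volume form to expose the radial weight $r^f$, apply the one-dimensional weighted Hardy inequality (Proposition \ref{HardyInequality}) slice by slice, and then dominate $|\partial_r u|$ by $|\nabla^{g_0}u|$ using the block-diagonal structure of $g_0$. Your additional remarks on the admissibility of the slices and the $r$-independence of the densities $\sqrt{\det h}\,\sqrt{\det g_L(y)}$ simply make explicit what the paper leaves implicit.
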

\begin{proof}
	The volume form on $Y\times C(L)$ is $r^fdrdvol_hdvol_{g_L}$ and for simplicity we denote it by $r^fdrdydz$. Thus, we have
	\begin{align*}
	\int_{Y\times C(L)} \frac{|u(r,y,z)|^p}{r^p}r^fdrdy&dz 
	= \int_L\int_Y \int_0^\infty \frac{|u(r,y,z)|^p}{r^p} r^fdrdydz\\
	&\leq\bigg|\frac{p}{f+1-p}\bigg|^p \int_L\int_Y \int_0^\infty |\partial_ru(r,y,z)|^p r^fdrdydz\\
	&\leq \bigg|\frac{p}{f+1-p}\bigg|^p  \int_L\int_Y \int_0^\infty |\nabla^{g_0} u(r,y,z)|^p r^fdrdydz\\
	& = \bigg|\frac{p}{f+1-p}\bigg|^p\int_L\int_Y \int_0^\infty |\nabla^{g_0} u(r,y,z)|^p dvol_{g_0}.
	\end{align*}
	where in the first inequality we applied Proposition \ref{HardyInequality} and in the second inequality the fact that $|\partial_ru|^p\leq |\nabla^{g_0}u|^p = \big(|\partial_ru|^2+\frac{|\nabla^{g_L}u|^2}{r^2}+|\nabla^hu|^2\big)^{p/2}$.
\end{proof}

\section{Geometry of Simple Edge Spaces}\label{Geometry Of Simple Edge Spaces Section}

In this section, we explore in more detail the geometry of simple edge spaces. As we will see, we can choose a finite cover of any singular stratum $Y$ of $X$, such that on the regular part near $Y$, an iterated edge metric is equivalent to the Euclidean. We will use this in the next section in order to obtain a Sobolev inequality on simple edge spaces. We first begin with a Lemma:
\begin{lem}\label{MintoS^n}
	Let $(L,g_L)$ be a compact manifold without boundary, with $\dim(L)=n$. Then, for every $\varepsilon>0$, there exists a finite cover of $L$ by charts $(U_i,\phi_i)$, such that each $U_i$ can be embedded into $S^n$, through a map $f_i:U_i\to f(U_i)\subseteq S^n$, and on $U_i$ we have
	\begin{align}
	(1-\varepsilon)f_i^*(g_{S^n})\leq g_L\leq (1+\varepsilon)f_i^*(g_{S^n})
	\end{align}
	as bilinear forms, and where $S^n$ is the standard $n$-sphere.
\end{lem}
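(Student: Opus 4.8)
The plan is to reduce everything to the elementary fact that, in geodesic normal coordinates centered at a point, a Riemannian metric is $C^0$-close to the flat coordinate metric on a sufficiently small ball. I would apply this observation \emph{both} to $(L,g_L)$ and to the round sphere $(S^n,g_{S^n})$, and then glue the two comparisons together through a single chart identification. Compactness of $L$ will then upgrade the pointwise construction to a finite cover.

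First I would fix $p\in L$ and choose geodesic normal coordinates $\phi:U\to\mathbb{R}^n$ with $\phi(p)=0$. In these coordinates the metric matrix $G(x)=(g_{ij}(x))$ satisfies $G(0)=I$, so by continuity, for any $\varepsilon'>0$ there is a ball on which $(1-\varepsilon')I\leq G(x)\leq(1+\varepsilon')I$ as symmetric matrices. Next I would fix any $q\in S^n$ and geodesic normal coordinates $\psi:V\to\mathbb{R}^n$ with $\psi(q)=0$; since the injectivity radius of $S^n$ equals $\pi$, such a chart covers a large geodesic ball and its coordinate domain can be taken as large as needed. The round metric matrix $S(x)$ likewise satisfies $S(0)=I$, hence $(1-\varepsilon'')I\leq S(x)\leq(1+\varepsilon'')I$ on a ball.

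Shrinking $U$ so that $\phi(U)$ lies inside both balls above, I would define the embedding $f:=\psi^{-1}\circ\phi:U\to V\subseteq S^n$. Its coordinate representation $\psi\circ f\circ\phi^{-1}$ is the identity map on $\mathbb{R}^n$, so in the $\phi$-coordinates the pullback $f^*(g_{S^n})$ is represented by exactly the matrix $S$. Thus $g_L$ and $f^*(g_{S^n})$ are expressed in the \emph{same} chart by matrices that are both close to $I$, and the two one-sided bounds combine: from $G\leq(1+\varepsilon')I$ and $S\geq(1-\varepsilon'')I$ one gets $G\leq\frac{1+\varepsilon'}{1-\varepsilon''}S$, and symmetrically $G\geq\frac{1-\varepsilon'}{1+\varepsilon''}S$. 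Choosing $\varepsilon',\varepsilon''$ small enough (depending only on $\varepsilon$) forces both ratios into $[1-\varepsilon,1+\varepsilon]$, which gives $(1-\varepsilon)f^*(g_{S^n})\leq g_L\leq(1+\varepsilon)f^*(g_{S^n})$ on $U$. Since this can be done around every point of $L$, compactness yields a finite subcover $\{(U_i,\phi_i)\}$ with embeddings $f_i$, completing the proof.

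The routine part is the continuity estimate for the metric components in normal coordinates. The only point requiring genuine care is the gluing step: one must shrink $U$ so that $f=\psi^{-1}\circ\phi$ is actually a well-defined embedding into $S^n$ (i.e. $\phi(U)\subseteq\psi(V)$), and one must observe that, \emph{because $f$ is coordinate-wise the identity}, the pullback metric is literally the sphere's coordinate matrix $S$. This identification is exactly what allows $g_L$ and $f^*(g_{S^n})$ to be compared entry-by-entry in one chart, rather than through an awkward change of variables, and it is what turns the two independent flat approximations into the single sandwiching inequality claimed in the statement.
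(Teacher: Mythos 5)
Your proof is correct and follows essentially the same route as the paper: both arguments sandwich $g_L$ and the pulled-back round metric between multiples of the flat metric in a single normal-coordinate chart around each point, and then invoke compactness of $L$ to extract the finite cover. The only real difference is cosmetic --- the paper realizes the sphere chart explicitly as the graph parameterization $x\mapsto\bigl(x,\sqrt{1-(x_1^2+\dots+x_n^2)}\bigr)$ of a hemisphere and computes $g_{S^n}$ in those coordinates, whereas you use abstract normal coordinates on $S^n$ and a chart-transfer map; your explicit handling of the ratios $\frac{1+\varepsilon'}{1-\varepsilon''}$ and $\frac{1-\varepsilon'}{1+\varepsilon''}$ is, if anything, slightly more careful than the paper's direct combination of the two estimates.
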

\begin{proof}
	Let $1>\varepsilon>0$, $p\in L$ and let $(U_p,\phi)$ to be normal coordinates around $p$. We denote them by $x_1(p),\dots,x_n(p)$. By shrinking $U$, we can assume that $\phi :U\to B(0,\delta)$ is a diffeomorphism for each $1>\delta>0$. If $v = \sum_i a_i\partial_{x_i}$ in local coordinates, then we have that
	\begin{align}\label{L with epsilon}
	(1-\varepsilon)\sum_i a_i^2 \leq g_L(v,v)\leq (1+\varepsilon)\sum_i a_i^2.
	\end{align}
	Since $\delta>0$ is arbitrary, we can assume that $\delta<1$. Then we consider the map $f:B(0,\delta)\to S^n$ which is defined by
	\begin{align*} 
	f(x_1,\dots,x_n)=(x_1,\dots,x_n,\sqrt{1-(x_1^2+\dots+x_n^2)}).
	\end{align*}
	Denote by $\partial_{x_i}^L, \partial_{x_i}^{S^n}$ the vectors fields with regard to these two maps in $L$ and $S^n$ respectively. Then these are related by $f_*(\phi_*(\partial_{x_i}^L))=\partial_{x_i}^{S^n}$. A simple computation shows that
	\begin{align}\label{metric on sphere}
	g_{S^n}(\partial_{x_i}^{S^n},\partial_{x_j}^{S^n}) = \delta_{ij} + \frac{x_ix_j}{(1-(x_1^2+\dots+x^2_n))},
	\end{align}
	since $g_{S^n} = i^*(g_{\mathbb{R}^{n+1}})$. By taking $v = \sum_i a_i\partial_{x_i}^{S^n}$ and making $\delta>0$ small enough, by unfolding definitions and using (\ref{metric on sphere}) we obtain that
	\begin{align}\label{S^n with epsilon}
	(1-\varepsilon)\sum_i a_i^2 \leq g_{S^n}(v,v)\leq (1+\varepsilon)\sum_i a_i^2.
	\end{align}
	Combining (\ref{L with epsilon}) and (\ref{S^n with epsilon}) and setting $f_i = f\circ \phi$, we obtain that
	\begin{align*}
	(1-\varepsilon)f_i^*(g_{S^n})\leq g_L\leq (1+\varepsilon)f_i^*(g_{S^n}).
	\end{align*}
	Since $L$ is compact, we can find a finite cover of $L$ by $(U_i,\phi_i)$ that each embeds into $S^n$ through an $f_i$ and that concludes the proof.
\end{proof}

\begin{lem}\label{lemmaCone}
	Let $(L,g_L)$ be a compact manifold with $\dim(L)=n-1$ and for $a>0$ consider the straight cone of $L$, i.e. the manifold $C_a(L) = \big((0,a)\times L, dr^2+r^2g_L\big)$. Then, for every $\varepsilon>0$, there exists a finite cover of $C_a(L)$ with charts $(V_i,\psi_i)$ such that each $V_i$ embeds into $\mathbb{R}^n$ through $f_i$ and on each $V_i$ we have
	\begin{align*}
	(1-\varepsilon)f_i^*(\delta_{kl})\leq dr^2+r^2g_L\leq (1+\varepsilon)f_i^*(\delta_{kl}),
	\end{align*}
	as bilinear forms, and where $\delta_{kl}$ stands for the Euclidean metric $g_{\mathbb{R}^n}(\partial_{x_k},\partial_{x_l})$.	
\end{lem}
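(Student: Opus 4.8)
The plan is to reduce the statement to Lemma \ref{MintoS^n} together with the elementary observation that the straight cone over the round sphere $S^{n-1}$ is nothing but Euclidean space. Concretely, the polar-coordinate map $\Phi\colon (0,\infty)\times S^{n-1}\to\mathbb{R}^n\setminus\{0\}$, $\Phi(r,\omega)=r\omega$, is a diffeomorphism with $\Phi^*(\delta_{kl})=dr^2+r^2g_{S^{n-1}}$. This is exactly the warped-product shape of the cone metric, with the round sphere playing the role of $L$, so the task is to transport the pinching of $g_L$ against the sphere metric through this warping.

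First I would apply Lemma \ref{MintoS^n} to the $(n-1)$-dimensional compact manifold $(L,g_L)$ (that is, with $n$ replaced by $n-1$): for the given $\varepsilon\in(0,1)$ it produces a finite cover $\{U_i\}$ of $L$ together with embeddings $\sigma_i\colon U_i\hookrightarrow S^{n-1}$ such that
\begin{align*}
(1-\varepsilon)\,\sigma_i^*(g_{S^{n-1}})\leq g_L\leq (1+\varepsilon)\,\sigma_i^*(g_{S^{n-1}})
\end{align*}
as bilinear forms on each $U_i$. I then set $V_i=(0,a)\times U_i$, which gives a finite cover of $C_a(L)$, and define $f_i\colon V_i\to\mathbb{R}^n$ by $f_i=\Phi\circ(\mathrm{id}\times\sigma_i)$, i.e.\ $f_i(r,x)=r\,\sigma_i(x)$. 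Since $\sigma_i$ is an embedding and $\Phi$ restricts to a diffeomorphism onto its image, each $f_i$ is an embedding and provides the chart $\psi_i$; by naturality of pullbacks $f_i^*(\delta_{kl})=dr^2+r^2\sigma_i^*(g_{S^{n-1}})$.

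The core computation is then a pointwise comparison exploiting the block (warped-product) structure. Any tangent vector of $C_a(L)$ splits as $v=\alpha\,\partial_r+w$ with $w$ tangent to $L$; both $dr^2+r^2g_L$ and $f_i^*(\delta_{kl})$ are block-diagonal with respect to this splitting, so
\begin{align*}
(dr^2+r^2g_L)(v,v)=\alpha^2+r^2g_L(w,w),\qquad f_i^*(\delta_{kl})(v,v)=\alpha^2+r^2\sigma_i^*(g_{S^{n-1}})(w,w).
\end{align*}
Feeding the pinching from Lemma \ref{MintoS^n} into the $g_L(w,w)$ term and using that the radial contribution $\alpha^2$ is common to both forms — together with the trivial bounds $(1-\varepsilon)\alpha^2\leq\alpha^2\leq(1+\varepsilon)\alpha^2$ — yields
\begin{align*}
(1-\varepsilon)\,f_i^*(\delta_{kl})(v,v)\leq (dr^2+r^2g_L)(v,v)\leq (1+\varepsilon)\,f_i^*(\delta_{kl})(v,v),
\end{align*}
which is precisely the asserted inequality. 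It is worth noting that no deterioration of the constant occurs, since the warping rescales only the spherical block while the radial block is matched exactly.

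I do not expect a genuine obstacle here beyond bookkeeping: the only place where geometry enters is Lemma \ref{MintoS^n}, and the passage to the cone is a formal consequence of the warped-product structure. The two mildly delicate points to verify are that each $f_i$ is truly an embedding and not merely an immersion — which holds because $\Phi$ carries the radial factor $(0,a)$ diffeomorphically and $\sigma_i$ is injective — and that the finitely many $V_i=(0,a)\times U_i$ cover all of $C_a(L)$, which is immediate from $\{U_i\}$ covering $L$.
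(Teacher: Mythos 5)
Your proposal is correct and follows essentially the same route as the paper: apply Lemma \ref{MintoS^n} to $L$, take the product cover $V_i=(0,a)\times U_i$, compose with the polar-coordinate identification of the cone over the sphere with a punctured ball in $\mathbb{R}^n$, and transfer the pinching through the warped-product structure. The only difference is that you spell out the block-diagonal comparison $(dr^2+r^2g_L)(v,v)=\alpha^2+r^2g_L(w,w)$ explicitly, which the paper leaves implicit.
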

\begin{proof}
	Take $1>\varepsilon>0$. By Lemma \ref{MintoS^n} we obtain a finite cover of $L$ by $(U_i,\phi_i)$ such that each $U_i$ embeds to $S^n$ through an $\tilde{f}_i$. This yields the cover $(V_i,\psi_i)$ for $C_a(L)$ which is defined by $V_i = C_a(U_i)$, $\psi_i = (id, \phi_i)$. Then by defining $F_i=(id, \tilde{f}_i)$ we see that $C_a(U_i)$ embeds into $C_a(S^n)$ which we identify with $B(0,a)\setminus\{0\} \subseteq \mathbb{R}^n$ through polar coordinates $\lambda : C_a(S^n)\to B(0,a)\setminus \{0\}$. By setting $f_i = \lambda\circ F_i$, we obtain that
	\begin{align*}
	(1-\varepsilon)f_i^*(\delta_{kl})\leq dr^2+r^2g_L\leq (1+\varepsilon)f_i^*(\delta_{kl}),
	\end{align*}
	and that concludes the proof.
\end{proof}

\begin{lem}\label{lemmaEdge}
	Let $(Y,h)$ be a compact Riemannian manifold without boundary with $\dim(Y) = b$, and $L$ be a compact manifold with $\dim(L)=n$. Let, $g_L:Y\times L\to T^*L\times T^*L$, be a smooth tensor, such that for every $y\in Y$, it restricts to a Riemannian metric on $L$. Then, for every $\varepsilon>0$ there exists a finite covering with charts $\big(W_i,\psi_i\big)$, of 
	\begin{align*}
	\bigg(Y\times(0,a)\times L, h+ dr^2 + r^2 g_L\bigg),
	\end{align*}
	such that each $W_i$ embeds into $\mathbb{R}^{1+b+n}$ through a map $f_i:W_i\to f_i(W_i)\subseteq \mathbb{R}^{1+b+n}$ and on $W_i$ we have
	\begin{align*}
	(1-\varepsilon)f_i^*(\delta_{kl})\leq h+dr^2+r^2g_L\leq(1+\varepsilon)f_i^*(\delta_{kl}),
	\end{align*}
	as bilinear forms, where $\delta_{kl}$ stands for the Euclidean metric $g_{\mathbb{R}^{1+b+n}}(\partial_{x_k},\partial_{x_l})$.
\end{lem}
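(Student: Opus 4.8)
The plan is to exploit the product structure of the metric $h + dr^2 + r^2 g_L$ together with the fact that the Euclidean metric on $\mathbb{R}^{1+b+n} = \mathbb{R}^b \times \mathbb{R}^{1+n}$ is the orthogonal sum of the Euclidean metrics on the two factors. In the coordinates $(y,r,z)$ the metric is block–diagonal: the $Y$-block is $h(y)$, the $r$-block is $1$, and the $L$-block is $r^2 g_L(y,z)$, so there are no cross terms and the metric splits as $h \oplus (dr^2 + r^2 g_L)$ with respect to the $Y$-versus-cone decomposition. Accordingly I would build each chart map $f_i$ as a product $f_i = \varphi_j \times \psi_k$, where $\varphi_j$ is a normal coordinate chart on a piece $V_j$ of $Y$ (landing in $\mathbb{R}^b$) and $\psi_k$ is a cone chart on a piece $V_k$ of $(0,a)\times L$ (landing in $\mathbb{R}^{1+n}$) produced by Lemma \ref{lemmaCone}. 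Since for such a product map $f_i^*(\delta_{\mathbb{R}^{1+b+n}}) = \varphi_j^*(\delta_{\mathbb{R}^b}) + \psi_k^*(\delta_{\mathbb{R}^{1+n}})$, and both the metric and this pullback are block–diagonal for the same splitting, a two-sided bound on each factor immediately yields the desired two-sided bound on the full metric.

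For the $Y$-factor, since $(Y,h)$ is a compact Riemannian manifold I would cover it by finitely many normal coordinate charts $\varphi_j : V_j \to \mathbb{R}^b$ in which, after shrinking, $(1-\varepsilon')\varphi_j^*(\delta) \leq h \leq (1+\varepsilon')\varphi_j^*(\delta)$ for a small $\varepsilon'$ to be fixed at the end; this is exactly the estimate (\ref{L with epsilon}) from the proof of Lemma \ref{MintoS^n}, applied now to $(Y,h)$ itself. The genuinely new difficulty, compared with Lemma \ref{lemmaCone}, is that the link metric $g_L$ depends on the edge variable $y$, so the cone embedding cannot be fixed once and for all; a priori it would have to vary with $y$, which would break the product form of $f_i$.

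I would resolve this by a uniform-continuity argument. Fix a center $y_j \in V_j$. Since $g_L$ is a smooth tensor on the compact manifold $Y \times L$ restricting to a metric on each slice, after further shrinking $V_j$ I can arrange that $(1-c)\, g_L(y_j) \leq g_L(y) \leq (1+c)\, g_L(y_j)$ as bilinear forms, uniformly in $z \in L$, for all $y \in V_j$, with $c$ as small as desired. Adding $dr^2$ and scaling the link part by $r^2$ preserves a two-sided bound, giving $(1-c)\big(dr^2 + r^2 g_L(y_j)\big) \leq dr^2 + r^2 g_L(y) \leq (1+c)\big(dr^2 + r^2 g_L(y_j)\big)$ on $V_j \times (0,a) \times L$. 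I then apply Lemma \ref{lemmaCone} to the fixed cone metric $dr^2 + r^2 g_L(y_j)$ to obtain finitely many cone charts $\psi_k : V_k \to \mathbb{R}^{1+n}$ with $(1-\varepsilon')\psi_k^*(\delta) \leq dr^2 + r^2 g_L(y_j) \leq (1+\varepsilon')\psi_k^*(\delta)$, and compose the two estimates.

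Setting $W_i = V_j \times V_k$ and $f_i = \varphi_j \times \psi_k$, the product splitting of the Euclidean metric yields $(1-c)(1-\varepsilon')f_i^*(\delta) \leq h + dr^2 + r^2 g_L \leq (1+c)(1+\varepsilon')f_i^*(\delta)$ on $W_i$. Choosing $c$ and $\varepsilon'$ small enough that $(1+c)(1+\varepsilon') \leq 1+\varepsilon$ and $(1-c)(1-\varepsilon') \geq 1-\varepsilon$ completes the pinching, and the cover is finite because $Y$ is covered by finitely many $V_j$ and each $dr^2 + r^2 g_L(y_j)$ needs only finitely many cone charts. The main obstacle is precisely the bookkeeping in this last step: making the three independent sources of distortion — the normal-coordinate error of $h$, the $y$-variation of $g_L$, and the cone embedding of Lemma \ref{lemmaCone} — simultaneously small and controlling their product uniformly over the finite cover so that it lands inside $[1-\varepsilon, 1+\varepsilon]$.
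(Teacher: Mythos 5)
Your proposal is correct and takes essentially the same route as the paper's proof: freeze the edge variable at a center point, control the $y$-variation of $g_L$ by uniform continuity on the compact $Y\times L$, apply Lemma \ref{lemmaCone} to the frozen cone metric, use normal coordinates for $h$, form product charts into $\mathbb{R}^b\times\mathbb{R}^{1+n}$, and conclude by compactness of $Y$. Your explicit tracking of the multiplicative errors $(1\pm c)(1\pm\varepsilon')$ and of the block-diagonal splitting is, if anything, slightly more careful than the paper's own bookkeeping.
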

\begin{proof}
	Let $\varepsilon>0$ and fix $y_0\in Y$. Since $L$ is compact, by Lemma \ref{lemmaCone}, we can find a finite cover $\big(U_i,\ \phi_i\big)$ of $L$, such on $U_i$ we have
	\begin{align}\label{g_y_0}
	(1-\varepsilon)\delta_{kl}\leq g_{L,y_0}\leq (1+\varepsilon)\delta_{kl}	
	\end{align}
	as bilinear forms, where $\delta_{kl}$ stands for the Euclidean metric. Let now $y\in V_{y_0}\subseteq Y$, $V_{y_0}$ open and let $v\in T_{U_i}L$ with $v = \sum_k a_i\frac{\partial}{\partial x_i}$, where $x_1,\cdots,x_n$ are the coordinates on $\phi_i(U_i)$. Then we have $g_{L,y}(v,v) = g_{L,y_0}(v,v) + g_{L,y}(v,v)-g_{L,y_0}(v,v)$. We estimate 
	\begin{align}\label{g_y-g_y_0}
	|g_{L,y}(v,v)-g_{L,y_0}(v,v)| 
	&= |\sum_{k,l} \big(g_{{L,y}_{kl}}-g_{{L,y_0}_{kl}}\big)a_ka_l|\nonumber\\ 
	&\leq \sum_{k,l} |\big(g_{{L,y}_{kl}}-g_{{L,y_0}_{kl}}\big)|\frac{a_k^2+a_l^2}{2} \nonumber\\
	&\leq \frac{\varepsilon}{2n}\sum_{k,l} \big(a_k^2+a_l^2\big)\nonumber\\
	&= \frac{\varepsilon}{2n} 2n |v|^2 = \varepsilon\sum_k a_k^2.
	\end{align}
	for $y\in V_{y_0}'\subseteq V_{y_0}$, since $g_{L}:Y\times L\to T^*L\otimes T^*L$ is a smooth tensor. Now, we are in position to apply Lemma \ref{MintoS^n} and Lemma \ref{lemmaCone} on the family $g_{L,y}$, with $y\in V'_{y_0}$. Then, we obtain that $C_a(U_i)$ embeds into $C_a(S^n)$ through $F_i$, which is diffeomorphic to $B(0,a)\setminus\{0\}$ under polar coordinates $\lambda$, and that for $y\in V_{y_0}'$, $v\in T_{U_i}$, we have
	\begin{align*}
	(1-2\varepsilon)(\lambda\circ F_i)^*(\delta_{kl})\leq dr^2+r^2g_{L,y}\leq (1+2\varepsilon)(\lambda\circ F_i)^*(\delta_{kl})	
	\end{align*}
	as bilinear forms, where $\delta_{kl}$ stands for the Euclidean metric. Then, by taking a possibly smaller neighborhood $V''_{y_0}$ of $y_0\in Y$, and normal coordinates $\phi_{y_0}$ at $y_0$, we obtain that
	\begin{align*}
	(1-\varepsilon)\delta_{kl}\leq h\leq (1+\varepsilon)\delta_{kl}.
	\end{align*}
	as bilinear forms. Then by taking the charts $\big( V''_{y_0}\times U_{i,y_0}, \ \phi_{y_0}\times \phi_i\big)_{i=1}^{N_{y_0}}$ we see that $V_{y_0}''\times C_a(U_{i,y_0})$ embeds to $\phi_{y_0}(V_{y_0}'')\times (B(0,a)\setminus\{0\})\subseteq \mathbb{R}^{1+b+n}$ through $f_i:= \phi_{y_0}\times (\lambda\circ F_i)$ and on $V_{y_0}''\times C_a(U_{i,y_0})$
	\begin{align*}
	(1-\varepsilon)f_i^*(\delta_{kl})\leq h+dr^2+r^2g_L\leq(1+\varepsilon)f_i^*(\delta_{kl})
	\end{align*}
	as bilinear forms. Since $Y$ is compact, if we repeat the above procedure for each $y\in Y$, we can find a finite family 
	\begin{align*}
	\{ V_{y_j}\times U_{i,j}, \ \phi_{y_j}\times f_{i,j}\}_{j=1,\cdots,N, \ i=1,\cdots N_{j}}.
	\end{align*}
	such that on each cover $V_{y_i}\times U_{i,j}$ we have
	\begin{align*}
	(1-\varepsilon)f_{i,j}^*(\delta_{kl})\leq h+dr^2+r^2g_L\leq(1+\varepsilon)f_{i,j}^*(\delta_{kl})
	\end{align*}
	as bilinear forms, where $\delta_{kl}$ stands for the Euclidean metric. That concludes the proof.
\end{proof}

On a simple edge space, the metric has the form
\begin{align}
g = g_0 +k,
\end{align}
where $|k|_{g_0} =  O(r^\gamma)$ for $\gamma>0$, $r$ the radial variable of the cone, and $g_0 = h+dr^2+r^2g_L$. Since $g$ and $g_0$ are quasi isometric, Lemma \ref{lemmaEdge} applies to $g$, but not necessarily with constants $1-\varepsilon,\ 1+\varepsilon$. In order to obtain constants like these, one should restrict to small neighborhoods around the singular strata.

Now let $X$ be a compact simple edge space. For simplicity we assume that it has only one stratum $Y$ of depth 1. (In the case where we have more than one singular strata, we proceed in the same way). As a consequence, there exists a neighborhood $U\subseteq X$, compact manifold $L$ and a locally trivial fibration
\begin{align*}
\phi: U\to Y\times C_2(L),
\end{align*}
such that
\begin{align*}
(\phi^{-1})^*(g)_{|_U} = g_0 + k,
\end{align*}
where $g_0=h+dr^2+r^2g$ and $|k|_{g_0}=O(r^\gamma)$ for some $\gamma>0$. Since $Y,\ L$ are compact, one can find finite covers $U_j$, $V_{i(j),j}$ respectively, and thus $\phi^{-1}\big(U_j\times C_2(V_{i,j})\big)$ is an open cover for $U$. According to the previous considerations and Remark \ref{quasiisometryremark}, one can choose this open cover such that each open set of this cover embeds into $\mathbb{R}^m$ through an $f$ and the metric there is equivalent with the Euclidean, i.e.
\begin{align}\label{Euclidianmetric}
\frac{1}{4} f^*(\delta_{ij}) \leq g \leq 4f^*(\delta_{ij}).
\end{align}
From now on, whenever we refer to this cover, we will just write $U_j\times C_2(V_i)$ instead of $U_j\times C_2(V_{i(j),j})$. Since $X$ is compact, on then can find a finite cover $M_\lambda$ such that $X\setminus U\subseteq \bigcup_\lambda M_\lambda$. Moreover, one can choose this cover so that (\ref{Euclidianmetric}) holds. Define now the projection
\begin{equation}
\begin{split}
&\pi : Y\times C_2(L)\to Y\times L,\\
&\pi(y,r,z)=(y,z).
\end{split}
\end{equation}
Then if $\chi_i,\ \psi_j$ are partitions of unity associated to the cover $U_i,\ V_j$ respectively, then $\pi^* (\chi_i\psi_j)$ is a partition of unity, associated with the cover $U_i\times C_2(V_j)$. For simplicity, we set $\rho_{ij}(y,r,z)= \pi^*\big(\chi_i(y)\psi_j(z)\big)$. An important observation is that for $u\in C^\infty(U_i\times C_2(V_j))$ we have
\begin{align*}
|\nabla^{g_0}u|^2 = |\partial_ru|^2+\frac{|\nabla^{g_L}u|^2}{r^2} + |\nabla^hu|^2,
\end{align*}
and thus, for $\rho_{ij}$ we have the bound
\begin{align}\label{partition1/r}
|\nabla^{g_0}\rho_{ij}|\leq \frac{C}{r}.
\end{align}

\begin{rmk}
	A novel difference between the partitions of unity $\rho_{ij}$ we considered here, and the partitions of unity we considered in Proposition \ref{boundedpartitions} is that the former are not bounded. The reason why this happens is that we defined them in open subsets $V_j\subseteq L$. But the partitions of unity in Proposition \ref{boundedpartitions} are independent of the $z$-variable, for $z\in L$, therefore they are bounded. 
\end{rmk}

\section[Functional Inequalities on Simple Edge Spaces]{Functional Inequalities on Simple Edge\\ Spaces}\label{FunctionalInequalitiesSection}

This, together with the next section constitute the core part of the manuscript. Here we are establishing functional inequalities on compact simple edge spaces. To do so, we heavily rely on the identification of the singular neighborhood near a singular stratum with the Euclidean as shown in section \ref{Geometry Of Simple Edge Spaces Section}. We note here that in our case the differential of the partitions of unity are not bounded. For this reason we frequently employ the version of Hardy inequality which we proved before (see Proposition \ref{HardyCone}). In the following part, $C$ will denote a generic constant that may vary from line to line. 

To begin with, we note that the Sobolev inequality holds in this context:

\begin{prop}(Sobolev Embedding)\label{SobolevEmbedding}
	Suppose $X$ is a compact simple edge space of dimension $m>1$. Then, there exists $A,\ B>0$ such that for all $u\in C^\infty_c(\reg(X))$ we have
	\begin{align}
	\big(\int_X |u|^{\frac{m}{m-1}}dvol_{g}\big)^{\frac{m-1}{m}} \leq A\int_X |\nabla u|dvol_{g} + B\int_X |u|dvol_{g}.
	\end{align}
\end{prop}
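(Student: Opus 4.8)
The plan is to localise via the finite cover built in Section~\ref{Geometry Of Simple Edge Spaces Section}, reduce on each chart to the classical Euclidean $L^1$ Sobolev inequality, and absorb the singular behaviour of the partition of unity using Proposition~\ref{HardyCone}. Write $u=\sum_\alpha\rho_\alpha u$ for a partition of unity $\{\rho_\alpha\}$ subordinate to the cover $\{U_i\times C_2(V_j)\}\cup\{M_\lambda\}$: the pieces $M_\lambda$ lie away from the singular stratum, so there $|\nabla^g\rho_\alpha|$ is bounded, while on the singular pieces $W_{ij}=U_i\times C_2(V_j)$ one only has the bound $|\nabla^{g_0}\rho_{ij}|\le C/r$ from \eqref{partition1/r}. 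Since $p^*=m/(m-1)\ge1$, the triangle inequality gives $\|u\|_{p^*}\le\sum_\alpha\|\rho_\alpha u\|_{p^*}$, so it suffices to estimate each summand separately.

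On a fixed chart $W$ of the cover, the embedding $f\colon W\to\mathbb{R}^m$ satisfies $\tfrac14 f^*(\delta)\le g\le 4f^*(\delta)$ by \eqref{Euclidianmetric}; this makes the volume densities and the pointwise gradient norms of $g$ and of $f^*\delta$ comparable up to constants depending only on $m$. Transporting $v=\rho_\alpha u$ to $f(W)\subseteq\mathbb{R}^m$ and applying the classical Gagliardo--Nirenberg--Sobolev inequality $\|v\|_{L^{p^*}(\mathbb{R}^m)}\le C_m\|\nabla v\|_{L^1(\mathbb{R}^m)}$, then transferring back, yields
\[
\|\rho_\alpha u\|_{L^{p^*}(W,g)}\le C\,\|\nabla^g(\rho_\alpha u)\|_{L^1(W,g)}\le C\Big(\|\nabla^g u\|_{L^1(W)}+\big\||\nabla^g\rho_\alpha|\,u\big\|_{L^1(W)}\Big),
\]
where we used $0\le\rho_\alpha\le1$ and the Leibniz rule.

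Summing over the finitely many charts, the first term is bounded by $\|\nabla u\|_{L^1(X)}$ up to the (finite) multiplicity of the cover, and on the interior pieces the bounded gradient contributes at most $C\|u\|_{L^1(X)}$. On the singular pieces \eqref{partition1/r} reduces everything to controlling $\int_U\frac{|u|}{r}\,dvol_g$, which is where Proposition~\ref{HardyCone} enters. Fix a radial cutoff $\eta(r)$ equal to $1$ for $r\le1$ and vanishing for $r\ge\tfrac32$; then $\eta u$ extends to a function in $C^\infty_c(\reg(Y\times C(L)))$, the link has dimension $f=\codim(Y)-1\ge1$ so the hypothesis $p\ne f+1$ holds for $p=1$, and quasi-isometry of $g$ and $g_0$ (Remark~\ref{quasiisometryremark}) gives
\[
\int_U\frac{|u|}{r}\,dvol_g\le C\!\int_U\frac{|u|}{r}\,dvol_{g_0}\le C\Big(\int\frac{|\eta u|}{r}\,dvol_{g_0}+\|u\|_{L^1}\Big)\le C\big(\|\nabla u\|_{L^1}+\|u\|_{L^1}\big),
\]
since on $\{r\ge1\}$ one has $\tfrac{1}{r}\le1$, Proposition~\ref{HardyCone} bounds $\int\frac{|\eta u|}{r}\,dvol_{g_0}$ by $\tfrac1f\|\nabla^{g_0}(\eta u)\|_{L^1}$, and $\eta'$ is supported in $\{1\le r\le\tfrac32\}$ away from the singularity so that $\int|u\,\eta'|\le C\|u\|_{L^1}$. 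Combining all pieces produces $\|u\|_{p^*}\le A\|\nabla u\|_{L^1}+B\|u\|_{L^1}$ for suitable $A,B>0$.

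I expect the main obstacle to be the handling of the \emph{unbounded} partition of unity: the factor $1/r$ in \eqref{partition1/r} is precisely what the Hardy inequality is designed to compensate, so the delicate points are (i) inserting the radial cutoff $\eta$ so that Proposition~\ref{HardyCone} applies while keeping the error $\int|u\,\eta'|$ under control by $\|u\|_{L^1}$, and (ii) verifying that the metric equivalence \eqref{Euclidianmetric} genuinely transfers \emph{both} the $L^{p^*}$-norm and the $L^1$-gradient norm with $m$-dependent constants. The recombination of the finitely many chart estimates via the triangle inequality and the bounded overlap of the cover is routine.
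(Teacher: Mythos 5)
Your proposal is correct and follows essentially the same route as the paper's proof: localization by the finite Euclidean-equivalent cover of Section~\ref{Geometry Of Simple Edge Spaces Section}, the classical $L^1$-Sobolev inequality on each chart transferred via \eqref{Euclidianmetric}, and Proposition~\ref{HardyCone} to absorb the $C/r$ bound \eqref{partition1/r} on the partition-of-unity gradients. The only cosmetic difference is that the paper splits $u=\phi_1u+\phi_2u$ with a radial near/far cutoff at the outset and applies Hardy to $\phi_1u$, whereas you insert the radial cutoff $\eta$ only at the Hardy step; the two devices play the identical role of making the Hardy inequality applicable to a compactly supported function on the model edge.
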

\begin{proof}
	We show the proof in the case where we have only one singular stratum $Y\subseteq U$. In the case where we have more, we can apply the same procedure in every stratum. Take $\phi_1:\reg(X)\to[0,1]$, with $\text{supp}(\phi_1)\subseteq U$ and $\phi_1 = 1$ for $r\leq 1$, $\phi_1 =0$ for $r\geq 3/2$. Set $\phi_2 = 1-\phi_1$. Then we have
	\begin{align}\label{phidecomposition}
	\|u\|_{\frac{m}{m-1}} \leq \|\phi_1u\|_{\frac{m}{m-1}}+\|\phi_2u\|_{\frac{m}{m-1}}.
	\end{align}
	Concerning $\phi_1u$ we have
	\begin{align*}
	\|\phi_1u\|_{\frac{m}{m-1}} 
	&= \|\sum_{ij} \rho_{ij}\phi_1u\|_{\frac{m}{m-1}}\leq \sum_{ij}\| \rho_{ij}\phi_1u\|_{\frac{m}{m-1}}.
	\end{align*}
	Recall, that each neighborhood $U_{ij}= V_i\times C_2(U_j)$ can be embedded through an embedding $f$ by Lemma \ref{lemmaEdge} to $\mathbb{R}^b\times C_2(S^n)$ which we identify with a subset of $\mathbb{R}^{m}$ through cylindrical coordinates $\lambda$. Therefore we obtain
	\begin{align*}
	\|\rho_{ij}\phi_1u\|_{\frac{m}{m-1}} 
	&= \bigg(\int_{V_i\times C_2(U_j)} |\rho_{ij}\phi_1u|^{\frac{m}{m-1}}dvol(g_0)\bigg)^{\frac{m-1}{m}}\\
	&\leq \bigg(C\int_{V_i\times C_2(U_j)} |\rho_{ij}\phi_1u|^{\frac{m}{m-1}} (\lambda\circ f)^*(dx)\bigg)^{\frac{m-1}{m}}\\
	&= \bigg(C\int_{(\lambda\circ f)(V_i\times C_2(U_j))} |(\rho_{ij}\phi_1u)\circ (\lambda\circ f)^{-1}|^{\frac{m}{m-1}} dx\bigg)^{\frac{m-1}{m}}\\
	&\leq C\int_{(\lambda\circ f)(V_i\times C_2(U_j))} |\nabla (\rho_{ij}\phi_1u)\circ (\lambda\circ f)^{-1}|dx\\
	& = C\int_{V_i\times C_2(U_j)} |\nabla (\rho_{ij}\phi_1u)|(\lambda\circ f)^*(dx)\\
	&\leq C \int_{V_i\times C_2(U_j)} |\nabla (\rho_{ij}\phi_1u)| dvol(g_0),
	\end{align*}
	where on the first and the third inequality we used Lemma \ref{lemmaEdge} and on the second inequality the Sobolev inequality on $\mathbb{R}^m$ with $m = 1+b+n$, since $(\lambda\circ f)(V_i\times C_2(U_j))\subseteq \mathbb{R}^m$. By (\ref{partition1/r}) and Proposition \ref{HardyCone}, we obtain that the former is bounded by
	\begin{align}\label{SobEdge}
	C_1 \int_{U_i\times C_2(V_j)}|\nabla (\phi_1u)| dvol_{g_0}\leq C_1 \int_X |\nabla u|dvol_{g_0} + C_2\int_X|u|dvol_{g_0}. 
	\end{align}
	Concerning $\phi_2u$ one procceeds exactly as above with respect to a cover of $X\setminus U$. The difference is that the partitions of unity are uniformly bounded, thus combining it with (\ref{SobEdge}) one obtains the required result.
\end{proof}

By classical means (see \cite{HebeySobolev} Lemma 3.1), one also obtains that for $1\leq p<m$,
\begin{align}\label{Sobolevnp/n-p}
W^{1,p}_0 (X)\hookrightarrow L^{\frac{mp}{m-p}}(X)
\end{align}
continuously. Furthermore, by using Theorem \ref{densitytheorem}, we obtain that if $1\leq p< m-\dim(Y)$ for every singular stratum $Y$ of $X$ and $p<m$, then
\begin{align*}
W^{1,p}(X) = W_0^{1,p}(X)\hookrightarrow L^{\frac{mp}{m-p}}(X).
\end{align*}
\begin{rmk}
	The Sobolev inequality with exponent $p=2$ holds in the more general case of compact stratified pseudomanifolds. For a proof, using different methods, see \cite{yamabestratified}, \cite{mondelo}.
\end{rmk} 
Moreover in this setting, the classical Rellich-Kondrachov theorem is true.
\begin{prop}\label{rellich simple edge}
	Let $X$ be a compact simple edge space of dimension $m>1$, and let $p,q$ satisfy $1\leq p<m$, $p\neq m-\dim(Y)$ for every singular stratum $Y$ of $X$, and $q<p^*=\frac{mp}{m-p}$. Then the embedding
	\begin{align*}
	W^{1,p}_0 (X)\hookrightarrow L^q(X)
	\end{align*}
	is compact.
\end{prop}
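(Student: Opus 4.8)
The plan is to adapt the classical proof of the Rellich--Kondrachov theorem, using the finite cover constructed in Section \ref{Geometry Of Simple Edge Spaces Section} to reduce the problem to Euclidean balls, where the classical statement applies. Let $\{u_k\}$ be a bounded sequence in $W^{1,p}_0(X)$; it suffices to extract a subsequence converging in $L^q(X)$. Exactly as in the proof of Proposition \ref{SobolevEmbedding}, I would fix a cutoff $\phi_1$ supported in a neighborhood $U$ of the singular stratum $Y$ with $\phi_1=1$ for $r\le 1$, set $\phi_2=1-\phi_1$, and write
\begin{align*}
u_k=\phi_2 u_k+\sum_{ij}\rho_{ij}\phi_1 u_k,
\end{align*}
where $\{\rho_{ij}\}$ is the partition of unity subordinate to the cover $\{U_i\times C_2(V_j)\}$. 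Since this is a \emph{finite} decomposition, it is enough to extract a subsequence along which each summand converges in $L^q$ and then diagonalize.

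First I would treat $\phi_2 u_k$: it is supported in a region of $\reg(X)$ that stays at a definite distance from $Y$, hence lies in a fixed smooth compact manifold (with boundary) on which the iterated edge metric is smooth and nondegenerate, and there the classical Rellich--Kondrachov theorem yields a subsequence converging in $L^q$. For the singular pieces $\rho_{ij}\phi_1 u_k$, I would use Lemma \ref{lemmaEdge} to embed $U_i\times C_2(V_j)$ into a bounded open set $\Omega_{ij}\subseteq\mathbb{R}^m$ through $\lambda\circ f$, with the metric comparable to the Euclidean one as in (\ref{Euclidianmetric}). Transporting the function, $v_k:=(\rho_{ij}\phi_1 u_k)\circ(\lambda\circ f)^{-1}$ lies in $W^{1,p}_0(\Omega_{ij})$, and the task is to bound its Euclidean $W^{1,p}$-norm uniformly in $k$. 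By the Leibniz rule $\nabla(\rho_{ij}\phi_1 u_k)=(\nabla\rho_{ij})\phi_1 u_k+\rho_{ij}\nabla(\phi_1 u_k)$; the second term is controlled by $\|u_k\|_{W^{1,p}}$, while the first, in view of the bound $|\nabla^{g_0}\rho_{ij}|\le C/r$ from (\ref{partition1/r}), is dominated by $\|u_k/r\|_{L^p}$.

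The crux is precisely this last term: here the Hardy inequality of Proposition \ref{HardyCone} gives $\|u_k/r\|_{L^p}\le C\|\nabla u_k\|_{L^p}$, and this is exactly the point where the hypothesis $p\neq m-\dim(Y)$ (equivalently $p\neq f+1$ with $f=\dim(L)$) is indispensable, being the admissibility condition for Hardy. Consequently $\{v_k\}$ is bounded in $W^{1,p}_0(\Omega_{ij})$, so by the Euclidean Rellich--Kondrachov theorem on the bounded domain $\Omega_{ij}$ (valid since $q<p^*$) a subsequence converges in $L^q(\Omega_{ij})$; transporting back through the quasi-isometry (\ref{Euclidianmetric}) preserves $L^q$ convergence up to a constant, giving convergence of $\rho_{ij}\phi_1 u_k$ in $L^q(X)$. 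Passing successively to subsequences over the finitely many indices $ij$ and over the $\phi_2$-piece, and applying the triangle inequality to the decomposition above, the resulting subsequence is Cauchy, hence convergent, in $L^q(X)$, which proves compactness. I expect the only genuine obstacle to be the uniform $W^{1,p}$-bound on the Euclidean side, that is, the appearance of the singular weight $1/r$ coming from the unbounded partition of unity $\rho_{ij}$, which is resolved precisely by the weighted Hardy inequality.
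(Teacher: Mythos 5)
Your proposal is correct and follows essentially the same route as the paper's proof: the same decomposition $u=\phi_2u+\sum_{ij}\rho_{ij}\phi_1u$, the same transplantation to Euclidean domains via Lemma \ref{lemmaEdge}, the same use of the Hardy inequality (Proposition \ref{HardyCone}) to absorb the $1/r$ singularity coming from $|\nabla\rho_{ij}|\leq C/r$, and the classical Rellich--Kondrachov theorem on each Euclidean piece and on the interior, followed by a finite diagonal extraction. The only cosmetic difference is that the paper first replaces $u_n$ by smooth compactly supported approximations $\tilde{u}_n$ (using the continuous embedding $W^{1,p}_0\hookrightarrow L^{p^*}$ and $q<p^*$), so that Proposition \ref{HardyCone} applies verbatim, whereas you work directly with $W^{1,p}_0$ functions and invoke Hardy by density.
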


\begin{proof}
	Let $\{u_n\}_{n\in\mathbb{N}}\subseteq W^{1,p}_0 (X)$ such that
	\begin{align*}
	\|u_n\|_{W^{1,p}_0}\leq M<\infty.
	\end{align*}
	Since $u_n\in W^{1,p}_0$, there exists $\tilde{u}_n \in C^\infty_c(\reg(X))$ such that $\|u_n-\tilde{u}_n\|_{W^{1,p}}<\frac{1}{n}$. Notice that $\|\tilde{u}_n\|_{W^{1,p}}\leq M+1$. If we find a convergent subsequence of $\{\tilde{u}_n\}$ that converges to $v\in L^q$, which we denote again by $\tilde{u}_n$, then $u_n\to v$ in $L^q$, because $\|u_n-v\|_{L^q} \leq \|u_n-\tilde{u}_n\|_{L^q} + \|\tilde{u}_n-v\|_{L^q}$. Notice that the second terms converges to $0$, and for the first term we have
	\begin{align*}
	\|u_n-\tilde{u}_n\|_{L^q} 
	&\leq C\|u_n-\tilde{u}_n\|_{L^{p^*}} \\ 
	&\leq C \|u_n-\tilde{u}_n\|_{W^{1,p}}\\
	&\leq \frac{C}{n}\to 0.
	\end{align*}
	Therefore, we can assume that $\{u_n\}\subseteq C^\infty_c(\reg(X))$. As before, we can find covers $U_i\times C_2(V_j)$ of $\cup_{Y\in \sing(X)}Y$, such that each cover is embedded though an embedding $f$ into $\mathbb{R}^b\times C_2(S^n)$ which we identify with a subset of $\mathbb{R}^{m}$ through $\lambda$. Then, as in the proof of Proposition \ref{SobolevEmbedding} we have
	\begin{align*}
	&\|(\rho_{ij}\phi_1u_n)\circ(\lambda\circ f)^{-1}\|^p_{W^{1,p}_{B(0,1)}} \\
	&= \int_{B(0,1)} |\nabla(\rho_{ij}\phi_1u_n)\circ(\lambda\circ f)^{-1}|^pdx + \int_{B(0,1)} |(\rho_{ij}\phi_1u_n)\circ(\lambda\circ f)^{-1}|^pdx\\
	&\leq C\bigg(\int_{(\lambda\circ f)^{-1}(B(0,1))} |\nabla(\rho_{ij}\phi_1u_n)|^pdvol_{g_0} + \int_{(\lambda\circ f)^{-1}(B(0,1))} |(\rho_{ij}\phi_1u_n)|^pdvol_{g_0}\bigg)\\
	& \leq C \bigg(\int_{(\lambda\circ f)^{-1}(B(0,1))} |\frac{\phi_1u_n}{r}|^pdvol_{g_0} + \| \phi_1u_n\|^p_{W^{1,p}(X)}\bigg)\\
	&\leq C \|u_n\|^p_{W^{1,p}(X)}\leq C(M+1),
	\end{align*}
	where on the first inequality we used Lemma \ref{lemmaEdge}, on the second we used that $|\nabla(\rho_{ij})\leq \frac{C}{r}$ and on the last we used Hardy inequality, i.e. Proposition \ref{HardyCone}.
	Then by Rellich-Kondrachov theorem, for $q<\frac{mp}{m-p}$, in each cover $(\lambda\circ f)(U_i\times C_2(V_j))\subseteq B(0,1)$ we find a convergent subsequence of $u_n$ in $L^q$. For the interior part we apply the classical Rellich-Kondrachov theorem and thus, after passing to a subsequence we obtain the desired result.
\end{proof}

\begin{rmk}
	See also Theorem 6.1 in \cite{beiBochner}
\end{rmk}
\begin{rmk}
	We can utilize Theorem \ref{densitytheorem} again. Under the hypotheses of Proposition \ref{rellich simple edge} and by assuming furthermore that $1\leq p < m-\dim(Y)$ for every singular stratum $Y$ of $X$, we obtain that for $1\leq q< p^*$, the embedding
	\begin{align*}
	W^{1,p}(X) = W^{1,p}_0(X)\hookrightarrow L^q(X)
	\end{align*}
	is compact.
\end{rmk}
From now on, we assume that we have the condition $1\leq p < m -\dim(Y)$ for every singular stratum $Y$ of $X$. So, there is no distinction between $W^{1,p}(X)$ and $W_0^{1,p}(X)$. With this condition, by using Proposition \ref{rellich simple edge} and the fact that a compact simple edge space has finite volume, one can prove the following version of Poincar\'e Inequality
\begin{prop}(Poincar\'e Inequality) Let $X$ be a connected, compact simple edge space of dimension $m>1$ and let $1\leq p<m$, $p< m-\dim(Y)$ for every singular stratum $Y$ of $X$. Then there exists a constant $C>0$ such that for $u\in W^{1,p}(X)$ we have
	\begin{align}\label{poincare}
	\|u-u_X\|_{p}\leq C \|\nabla u\|_p,
	\end{align}
	where $u_X = \frac{1}{\vol(X)}\int_Xu(x)dvol_{g}$.
\end{prop}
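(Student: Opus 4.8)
The plan is to prove the Poincar\'e inequality by the standard compactness-contradiction argument (the ``Poincar\'e via Rellich'' method), which is available precisely because Proposition \ref{rellich simple edge} gives us the compact embedding $W^{1,p}(X)\hookrightarrow L^p(X)$. Note that under the stated hypothesis $1\leq p<m-\dim(Y)$ for every singular stratum $Y$, we have $p\neq m-\dim(Y)$ and $q=p<p^* $, so Proposition \ref{rellich simple edge} applies with $q=p$; moreover $W^{1,p}(X)=W^{1,p}_0(X)$ by Theorem \ref{densitytheorem}, so there is no ambiguity in the space under consideration.

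First I would argue by contradiction: suppose no such constant $C$ exists. Then for each $n\in\mathbb{N}$ there is a function $u_n\in W^{1,p}(X)$ with
\begin{align*}
\|u_n-(u_n)_X\|_p > n\,\|\nabla u_n\|_p.
\end{align*}
Set $v_n = (u_n-(u_n)_X)/\|u_n-(u_n)_X\|_p$. Then $v_n$ is normalized so that $\|v_n\|_p=1$, its average $(v_n)_X=0$, and the defining inequality gives $\|\nabla v_n\|_p < 1/n \to 0$. In particular $\{v_n\}$ is bounded in $W^{1,p}(X)$, since $\|v_n\|_{W^{1,p}}^p=\|v_n\|_p^p+\|\nabla v_n\|_p^p$ stays bounded.

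Next I would extract convergence. Since $W^{1,p}(X)$ is reflexive for $p>1$ (and for $p=1$ one works directly with the compact embedding), after passing to a subsequence we may assume $v_n\rightharpoonup v$ weakly in $W^{1,p}(X)$; by the compactness in Proposition \ref{rellich simple edge} with $q=p$, we may also assume $v_n\to v$ strongly in $L^p(X)$. Strong $L^p$ convergence forces $\|v\|_p=\lim\|v_n\|_p=1$ and, passing to the limit in the average, $v_X=0$. On the other hand, weak lower semicontinuity of the norm $u\mapsto\|\nabla u\|_p$ gives $\|\nabla v\|_p\leq\liminf\|\nabla v_n\|_p=0$, so $\nabla v=0$ almost everywhere on $\reg(X)$. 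Since $X$ is connected and $\reg(X)$ is the dense connected open stratum, a function with vanishing distributional gradient on $\reg(X)$ must be constant there; thus $v$ is a (nonzero) constant. But then $v_X=0$ forces that constant to be $0$, contradicting $\|v\|_p=1$.

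The main obstacle, and the step requiring the most care, is the claim that $\nabla v=0$ on the connected space $\reg(X)$ implies $v$ is constant. On a smooth connected manifold this is classical, but here $\reg(X)$ is an open manifold whose completion adds the singular strata, and one must ensure no ``jumps'' across the singular set are allowed for $W^{1,p}$ functions. This is exactly where the density identity $W^{1,p}(X)=W^{1,p}_0(X)$ from Theorem \ref{densitytheorem} is essential: because $v$ can be approximated in the $W^{1,p}$-norm by functions in $C^\infty_c(\reg(X))$, and because the singular set has codimension large enough (the hypothesis $p<m-\dim(Y)$ gives $\codim(Y)>p$, so the singular set is $W^{1,p}$-negligible and does not disconnect $\reg(X)$ for the purposes of Sobolev functions), $v$ cannot be locally constant with different values on different pieces. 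Hence the constancy conclusion is legitimate, and the contradiction closes the argument.
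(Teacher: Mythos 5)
Your overall strategy is the same as the paper's: argue by contradiction, normalize $v_n=(u_n-(u_n)_X)/\|u_n-(u_n)_X\|_p$ so that $\|v_n\|_p=1$, $(v_n)_X=0$, $\|\nabla v_n\|_p<1/n$, invoke Proposition \ref{rellich simple edge} with $q=p<p^*$ to get strong $L^p$ convergence to some $v$ with $\|v\|_p=1$ and $v_X=0$, and then show $\nabla v=0$ to reach a contradiction. However, two of your steps need repair.

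First, your route to $\nabla v=0$ uses weak compactness of bounded sequences in $W^{1,p}(X)$ together with weak lower semicontinuity of $u\mapsto\|\nabla u\|_p$. This requires reflexivity and genuinely fails at $p=1$, which is included in the statement; the parenthetical ``for $p=1$ one works directly with the compact embedding'' is not an argument, since the compact embedding by itself gives no information about $\nabla v$. The paper avoids reflexivity entirely: because $v_n\to v$ strongly in $L^p$ and $\|\nabla v_n\|_p\to 0$, pairing against a test function $\phi$ gives
\begin{align*}
-\int_X v\,\nabla\phi \;=\; -\lim_{n\to\infty}\int_X v_n\,\nabla\phi \;=\; \lim_{n\to\infty}\int_X (\nabla v_n)\,\phi \;=\;0,
\end{align*}
so $\nabla v=0$ in the distributional sense, uniformly for all $p\in[1,m)$. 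You should use this (or restrict your weak-convergence argument to $p>1$ and supply this for $p=1$).

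Second, your justification of the constancy step is incorrect, even though the conclusion is true. You claim that the identity $W^{1,p}(X)=W^{1,p}_0(X)$ from Theorem \ref{densitytheorem} is what prevents $v$ from being locally constant with different values on different ``pieces'' of $\reg(X)$. It does not: if $\reg(X)$ had two components $A$ and $B$, the function equal to $1$ on $A$ and $0$ on $B$ is smooth on $\reg(X)$ with vanishing gradient, hence lies in $W^{1,p}(\reg(X))$, and by that very density theorem it also lies in $W^{1,p}_0(\reg(X))$ --- density of $C^\infty_c(\reg(X))$ never excludes locally constant functions, since each component can be approximated separately. What actually makes the step valid is that $\reg(X)$ is \emph{connected}: in the paper's definition of a stratified pseudomanifold the regular set is a single stratum, and strata are by definition connected manifolds. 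On a connected open manifold, vanishing distributional gradient implies $v$ is constant a.e.; since $\sing(X)$ has measure zero, $v$ is constant a.e.\ on $X$, and then $v_X=0$ forces $v=0$, contradicting $\|v\|_p=1$. So the constancy conclusion rests on connectedness of the regular stratum, not on the codimension hypothesis or the density theorem as you assert.
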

\begin{proof}
	Suppose that (\ref{poincare}) is not true. Then, for every $k\in\mathbb{N}$ there exists $u_k\in W^{1,p}(X)$ such that 
	\begin{align*}
	\|u_k-(u_k)_X\|_{p}> k \|\nabla u_k\|_p.
	\end{align*}
	Set $v_k=\frac{u_k-(u_k)_X}{\|u_k-(u_k)_X\|_{p}}\in W^{1,p}(X)$. Then by hypothesis we have that
	\begin{align*}
	\|\nabla v_k\|_p<\frac{1}{k},\ \ \|v_k\|_p=1.
	\end{align*}
	Since $\|v_k\|_{W^{1,p}}$ is uniformly bounded, we can apply Proposition \ref{rellich simple edge} and obtain a subsequence, which we denote again by $v_k$, that converges strongly in $L^p(X)$, since $p<p^*$. Thus there exists $v\in L^p(X)$ such that $v_k\to v$ strongly in $L^p(X)$. Then we have that $\|v\|_p=1$, $v_X=0$ and $\|\nabla v_k\|_p\to 0$. Pairing $v$ against a test function $\phi\in C^\infty_c(X)$ gives
	\begin{align*}
	-\int_X v\nabla \phi 
	&= -\lim_{k\to\infty}\int_X v_k\nabla \phi \\
	&= \lim_{k\to\infty}\int_X \nabla v_k \phi\\
	&=0.
	\end{align*}
	That is $\nabla v=0$, thus $v$ is constant and since $v_X=0$ then it is $0$, which is a contradiction.
\end{proof}
Using the Sobolev Embedding, one can prove a stronger version of Poincar\'e inequality, namely the Sobolev-Poincar\'e inequality:
\begin{prop}(Sobolev-Poincar\'e Inequality)
	Let $X$ be a connected, compact simple edge space of dimension $m>1$ and let $1\leq p< m$, $p<m-\dim(Y)$ for every singular stratum $Y$ of $X$. Then there exists a constant $C>0$ such that for $u\in W^{1,p}(X)$ we have   
	\begin{align}\label{Sobolevpoincare}
	\|u-u_X\|_{p^*}\leq C \|\nabla u\|_p,
	\end{align}
	where $u_X = \frac{1}{\vol(X)}\int_Xu(x)dvol_{g}$.
\end{prop}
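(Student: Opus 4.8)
The plan is to derive \eqref{Sobolevpoincare} by combining the Sobolev inequality with the ordinary Poincar\'e inequality \eqref{poincare}, applying the former to the mean-zero function $v = u - u_X$ and then absorbing the resulting lower-order $L^p$-term via the latter. Concretely, I would first observe that $u_X$ is a constant and that, since a compact simple edge space has finite volume, constant functions belong to $W^{1,p}(X)$ with vanishing gradient. Consequently $v = u - u_X \in W^{1,p}(X)$ and $\nabla v = \nabla u$. Under the standing hypothesis $1\leq p < m - \dim(Y)$ for every singular stratum $Y$, Theorem \ref{densitytheorem} gives $W^{1,p}(X) = W^{1,p}_0(X)$, so $v$ is approximable by functions in $C^\infty_c(\reg(X))$ and the Sobolev inequality is available for it.

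Next I would invoke the Sobolev inequality. By Proposition \ref{SobolevEmbedding} together with the standard bootstrapping to general exponents $1\leq p<m$ (recorded in \eqref{Sobolevnp/n-p} following \cite{HebeySobolev}), there exist $A,B>0$ such that
\begin{align*}
\|v\|_{p^*} \leq A\|\nabla v\|_p + B\|v\|_p = A\|\nabla u\|_p + B\|u-u_X\|_p,
\end{align*}
where the equality uses $\nabla v = \nabla u$. The remaining task is to control the last summand, which is exactly the left-hand side of the Poincar\'e inequality \eqref{poincare}: there is a constant $C_0>0$ with $\|u-u_X\|_p \leq C_0\|\nabla u\|_p$.

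Combining the two displays yields
\begin{align*}
\|u-u_X\|_{p^*} = \|v\|_{p^*} \leq A\|\nabla u\|_p + BC_0\|\nabla u\|_p = (A + BC_0)\|\nabla u\|_p,
\end{align*}
so \eqref{Sobolevpoincare} holds with $C = A + BC_0$. I do not expect a genuine analytic obstacle here, since the statement is a formal consequence of two inequalities already established; the only point requiring care is the justification that $v = u - u_X$ lies in $W^{1,p}_0(X)$, so that the Sobolev inequality applies to it. This is precisely where the hypothesis $p < m - \dim(Y)$ enters, via the identification $W^{1,p}(X) = W^{1,p}_0(X)$ from Theorem \ref{densitytheorem}, and it is also the reason the same hypothesis was already imposed for \eqref{poincare}.
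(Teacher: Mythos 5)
Your proposal is correct and follows essentially the same route as the paper, which proves this proposition by combining the Sobolev embedding \eqref{Sobolevnp/n-p} with the Poincar\'e inequality \eqref{poincare} (citing Hebey, Proposition 3.9, for the details you have written out). Your additional care in verifying that $u - u_X \in W^{1,p}(X) = W^{1,p}_0(X)$ via Theorem \ref{densitytheorem}, so that the Sobolev inequality applies, is exactly the point the paper handles by its standing assumption $p < m - \dim(Y)$.
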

\begin{proof}
	The proof is a combination of Sobolev Embedding (\ref{Sobolevnp/n-p}) and Poincar\'e Inequality. For details, see \cite{HebeySobolev} Proposition 3.9.
\end{proof}

\section{Optimization of Constants}\label{B-Constant Section}

In this section we are focusing on obtaining optimal constants of Sobolev inequalities. In order to do so, we use the cut-off functions introduced in section \ref{Construction of Cut-Off Functions}. We obtain optimal results concerning the constants of the $L^p$ norms of the functions in the embeddings $W^{2,p}_0\hookrightarrow W^{1,p^*}_0$ and $W^{1,p}_0\hookrightarrow L^{p^*}$. To be more precise, in the previous section we proved the Sobolev embedding on a compact simple edge space $X$, i.e. 

\begin{equation}
\tag{$I_p$}
\|u\|_{p^*}\leq A\|\nabla u\|_p+B\|u\|_p
\label{eqn:Sobolevp}
\end{equation}
with $u\in W_0^{1,p}(X)$ and $1\leq p<\dim(X)=m$.

\subsection{The Embedding $W^{1,p}_0\hookrightarrow L^{p^*}$.}

The construction of $(1,p)-$cut-off functions allow us to prove some optimal results concerning the constant $B>0$. To be more precise, we set	

\begin{align*}
B_p(X) = \inf\{B>0: \text{such\ that}\ \exists \ A>0\ \text{such\ that}\ \eqref{eqn:Sobolevp} \  \text{holds}\}.
\end{align*}
Two questions that are of interest are
\begin{itemize}{}
	\item Compute $B_p(X)$.
	\item Does there exist an $A>0$ such that \eqref{eqn:Sobolevp} holds with $B=B_p(X)$?
\end{itemize}
This questions are part of the so called AB-programm, which consists of finding the optimal constants for various functional inequalities, such as the Sobolev inequality and Sobolev-Poincar\'e inequality (for more details, see \cite{HebeySobolev} and \cite{ABProgramm}). In this section we answer these questions in the case of compact simple edge spaces with the condition $1\leq p< m-\dim(Y)$ for every singular stratum $Y$ of $X$. For this reason, we can apply Theorem \ref{HessianCutOff}. The condition $1\leq p < m-\dim(Y)$ for every singular stratum $Y$ of $X$, guarantees the existence of a sequence of cut-off functions $\{\chi_n\}_{n\in\mathbb{N}}\subseteq C^\infty_c(\reg(X))$ such that
\begin{itemize}
	\item $0\leq \chi_n\leq 1$.
	\item $\forall$ compact $K\subseteq \reg(X),\ \exists\ n_0\in\mathbb{N}$ such that $\forall\ n\geq n_0$ we have $\chi_{n_{|K}}=1$.
	\item $\int_X |\nabla \chi_n|^p dv_{g}\to 0$.
\end{itemize}
Now, plug $\chi_n$ in \eqref{eqn:Sobolevp}. Since $vol(X)<\infty$, using the properties of these cut-offs and Lebesgue's dominated convergence theorem, we obtain by taking $n\to \infty$
\begin{align*}
\vol(X)^{\frac{1}{p^*}}\leq B\vol(X)^{\frac{1}{p}},
\end{align*}
which gives a lower bound for $B$, i.e.
\begin{align}\label{Bbound}
B\geq \vol(X)^{-\frac{1}{m}}.
\end{align}
This gives that $\forall \ p $ with $1\leq p\leq m-\dim(Y)$ for every singular stratum $Y$ of $X$, we have $\vol(X)^{-\frac{1}{m}}\leq B_p(X)$. Now using a Sobolev-Poincar\'e inequality and the fact that $\vol(X)<\infty$, one can see that $B_p(X)$ is attainable. More precisely, by Sobolev-Poincar\'e we have that for $1\leq p<m$, $p< m-\dim(Y)$ for every singular stratum $Y$ of $X$, there exists $C>0$ such that
\begin{align*}
\|u-u_X\|_{p^*}\leq C\|\nabla u\|_p,
\end{align*}
for $u\in W^{1,p}_0(X)$. Using triangle inequality we obtain that
\begin{align*}
\|u\|_{p^*}
&\leq C\|\nabla u\|_p + \|u_X\|_{p^*}\\
&\leq C\|\nabla u\|_p + \vol (X)^{\frac{1}{p^*}-1}\int_X |u|\\
&\leq C\|\nabla u\|_p + \vol(X)^{\frac{1}{p^*}-1}\|u\|_p \vol(X)^{1-\frac{1}{p}}\\
&= C\|\nabla u\|_p + \vol(X)^{-\frac{1}{m}}\|u\|_p.
\end{align*}
Combining this with the lower bound (\ref{Bbound}) we obtain the following Theorem.

\begin{thm}\label{B-optimal Constant Theorem}
	Let $X$ be a connected, compact simple edge space of dimension $m>1$. Then if $1\leq p<m-\dim(Y)$ for every singular stratum $Y$ of $X$, there exists $A>0$ such that for every $u\in W^{1,p}(X)$ we have
	\begin{equation}
	\tag{$I_{p,{B_{opt}}}$}
	\|u\|_{p^*}\leq A\|\nabla u\|_p + \vol(X)^{-\frac{1}{m}}\|u\|_p.
	\label{eqn:BestB}
	\end{equation}
	Moreover, the constant $\vol(X)^{-\frac{1}{m}}$ is optimal, in the sense that if there exists a $B>0$ such that \eqref{eqn:Sobolevp} holds with $B$, then $B\geq \vol(X)^{-\frac{1}{m}}$.
\end{thm}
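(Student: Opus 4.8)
The plan is to split the statement into two independent assertions: first, the existence of $A>0$ for which \eqref{eqn:BestB} holds with the explicit constant $\vol(X)^{-1/m}$; and second, the optimality, namely that no admissible $B$ in \eqref{eqn:Sobolevp} can be smaller than $\vol(X)^{-1/m}$. Throughout I would use that the hypothesis $1\le p<m-\dim(Y)$ for every singular stratum $Y$ forces $\codim(Y)>p$, so that Theorem \ref{densitytheorem} gives $W^{1,p}(X)=W^{1,p}_0(X)$ (hence the statement for $u\in W^{1,p}(X)$ is meaningful) and Theorem \ref{HessianCutOff} supplies a sequence of $(1,p)$-cut-offs; both facts are used below.

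For the inequality, I would start from the Sobolev--Poincar\'e inequality \eqref{Sobolevpoincare}, which provides $C>0$ with $\|u-u_X\|_{p^*}\le C\|\nabla u\|_p$ for all $u\in W^{1,p}(X)$. Applying the triangle inequality gives $\|u\|_{p^*}\le C\|\nabla u\|_p+\|u_X\|_{p^*}$, so it remains to control the constant function $u_X$. Since $u_X$ is constant, $\|u_X\|_{p^*}=|u_X|\,\vol(X)^{1/p^*}$, and estimating $|u_X|\le \vol(X)^{-1}\|u\|_1\le \vol(X)^{-1/p}\|u\|_p$ by H\"older's inequality yields $\|u_X\|_{p^*}\le \vol(X)^{1/p^*-1/p}\|u\|_p$. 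The arithmetic identity $\tfrac{1}{p^*}-\tfrac1p=-\tfrac1m$ then produces exactly the coefficient $\vol(X)^{-1/m}$, and setting $A=C$ finishes this half.

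For optimality, suppose \eqref{eqn:Sobolevp} holds with some constants $A,B>0$, and feed the cut-off sequence $\{\chi_n\}\subseteq C^\infty_c(\reg(X))$ from Theorem \ref{HessianCutOff} into it, obtaining $\|\chi_n\|_{p^*}\le A\|\nabla\chi_n\|_p+B\|\chi_n\|_p$. The third cut-off property gives $\|\nabla\chi_n\|_p\to0$, so the gradient term disappears in the limit. Because every point of $\reg(X)$ lies in some compact set on which $\chi_n$ is eventually equal to $1$, we have $\chi_n\to1$ pointwise a.e.\ on the full-measure set $\reg(X)$; since $0\le\chi_n\le1$ and $\vol(X)<\infty$, dominated convergence gives $\|\chi_n\|_{p^*}\to\vol(X)^{1/p^*}$ and $\|\chi_n\|_p\to\vol(X)^{1/p}$. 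Passing to the limit leaves $\vol(X)^{1/p^*}\le B\,\vol(X)^{1/p}$, i.e.\ $B\ge\vol(X)^{1/p^*-1/p}=\vol(X)^{-1/m}$, as claimed.

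I expect no serious obstacle here, since all the machinery is already in place; the only point demanding genuine care is the limit passage in the optimality argument, where one must justify both the pointwise convergence $\chi_n\to1$ on $\reg(X)$ and the applicability of dominated convergence to identify the limits of the $L^{p^*}$ and $L^p$ norms of the cut-offs, together with the vanishing of the gradient term supplied by the $(1,p)$-cut-off property.
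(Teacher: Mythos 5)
Your proposal is correct and follows essentially the same route as the paper: the explicit constant comes from the Sobolev--Poincar\'e inequality \eqref{Sobolevpoincare} plus the triangle and H\"older inequalities applied to the constant $u_X$, and the optimality comes from plugging the $(1,p)$-cut-offs of Theorem \ref{HessianCutOff} into \eqref{eqn:Sobolevp} and passing to the limit by dominated convergence. Your added care about justifying $\chi_n\to 1$ a.e.\ on $\reg(X)$ and the identification $W^{1,p}(X)=W^{1,p}_0(X)$ via Theorem \ref{densitytheorem} only makes explicit what the paper leaves implicit.
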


\subsection{The Embedding $W^{2,p}_0\hookrightarrow W^{1,p^*}_0$}

Now we focus on the embedding $W^{2,p}\hookrightarrow W^{1,p^*}$. Recall, that this embedding is obtained by the embedding $W^{1,p}\hookrightarrow L^{p^*}$ and the Kato inequality
\begin{align*}
|\nabla |\nabla^k u||\leq |\nabla^{k+1}u|,
\end{align*}
for $u\in C^\infty(\reg(X))$. Therefore we obtain positive constants $A,B,C>0$, such that for every $u\in C^\infty_c(\reg(X))$ we have:
\begin{align}\label{Second Order Sobolev ineq}
\|\nabla u\|_{L^{p^*}} + \|u\|_{L^{p^*}} \leq A \|\nabla^2u\|_{L^{p}} + B\|\nabla u\|_{L^p} + C\|u\|_{L^p}.
\end{align}
with $1\leq p < m$. Now, by Theorem \ref{HessianCutOff}, the condition $2p^* <m-\dim(Y)$ for every singular stratum $Y$, implies the existence of $(2,p^*)-$cut-offs. Notice now by using H\"older inequality and $\vol(X)<\infty$, that if we have a sequence $\{\chi_n\}_{n\in\mathbb{N}}$ of $(2,p^*)-$cut-offs, then this is a sequence of $(1,q)$ and $(2,q)-$cut-offs, for $q\leq p^*$. Thus, by plugging it in (\ref{Second Order Sobolev ineq}) and letting $n\to\infty$, we obtain
\begin{align*}
C\geq \vol(X)^{-\frac{1}{m}}.
\end{align*}
Similarly as before, we apply (\ref{Sobolevpoincare}) and Kato inequality on the function $v = |\nabla u|$, since $v\in W^{1,p}$ and $1\leq p < p^*<m-\dim(Y)$ for every singular stratum $Y$ of $X$ and we obtain
\begin{align*}
\||\nabla u|-|\nabla u|_X\|_{p^*}
&\leq C \|\nabla^2u\|_p.
\end{align*}
On the other hand we have
\begin{align*}
\||\nabla u|_X\|_{p^*} 
&= \vol(X)^{\frac{1}{p^*}-1} \int |\nabla u|\\
&\leq \vol(X)^{\frac{1}{p^*}-1} \|\nabla u\|_p \vol(X)^{1-\frac{1}{p}}\\
&= \vol(X)^{-\frac{1}{m}}\|\nabla u\|_p.
\end{align*}
Finally, we use Theorem \ref{B-optimal Constant Theorem} and by adding the inequalities we obtain
\begin{align*}
\|\nabla u\|_{p^*} + \|u\|_{p^*} \leq C\|\nabla^2u\|_p + (A+\vol(X)^{-\frac{1}{m}}) \|\nabla u\|_p + \vol(X)^{-\frac{1}{m}}\|u\|_p.
\end{align*}
Therefore, we have the following 
\begin{thm}\label{Second Order Optimal Sobolev Theorem}
	Let $X$ be a connected, compact simple edge space of dimension $m>1$. Then if $p\in [1,\infty)$, with $1\leq 2p^*<m-\dim(Y)$ for every singular stratum $Y$ of $X$, then there exists $A,B>0$ such that
	\begin{equation}
	\tag{$I_{p,2,Opt}$}
	\|\nabla u\|_{p^*} + \|u\|_{p^*} \leq A\|\nabla^2u\|_p + B \|\nabla u\|_p + \vol(X)^{-\frac{1}{m}}\|u\|_p.
	\label{eqref:SecondOrderIp}
	\end{equation}
	Moreover, the constant $\vol(X)^{-\frac{1}{m}}$ is optimal, in the sense that if there exists a $C>0$ such that (\ref{Second Order Sobolev ineq}) holds with $C>0$, then $C\geq \vol(X)^{-\frac{1}{m}}$.
\end{thm}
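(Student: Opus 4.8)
The plan is to prove the inequality \eqref{eqref:SecondOrderIp} by bootstrapping the first-order optimal inequality of Theorem \ref{B-optimal Constant Theorem} up to the gradient via the Kato inequality together with the Sobolev--Poincar\'e inequality, and then to establish optimality by testing \eqref{Second Order Sobolev ineq} against the sequence of second-order cut-offs furnished by Theorem \ref{HessianCutOff}.

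For the inequality itself, first observe that the hypothesis $2p^* < m - \dim(Y)$ forces $1 \le p < p^* < m - \dim(Y)$, so that both the first-order optimal inequality and the Sobolev--Poincar\'e inequality \eqref{Sobolevpoincare} are available at the exponents $p$ and $p^*$. I would set $v = |\nabla u|$; the Kato inequality $|\nabla|\nabla u|| \le |\nabla^2 u|$ guarantees that $v \in W^{1,p}(X)$ with $\|\nabla v\|_p \le \|\nabla^2 u\|_p$. Applying \eqref{Sobolevpoincare} to $v$ then gives $\||\nabla u| - |\nabla u|_X\|_{p^*} \le C\|\nabla^2 u\|_p$. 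The constant part is controlled by H\"older's inequality and the finiteness of the volume: since $|\nabla u|_X$ is constant, $\||\nabla u|_X\|_{p^*} = \vol(X)^{1/p^*-1}\int_X |\nabla u| \le \vol(X)^{-1/m}\|\nabla u\|_p$, using $1/p^* - 1/p = -1/m$. The triangle inequality then yields $\|\nabla u\|_{p^*} \le C\|\nabla^2 u\|_p + \vol(X)^{-1/m}\|\nabla u\|_p$. Adding this to the first-order optimal inequality $\|u\|_{p^*} \le A\|\nabla u\|_p + \vol(X)^{-1/m}\|u\|_p$ supplied by Theorem \ref{B-optimal Constant Theorem} produces \eqref{eqref:SecondOrderIp} with $B = A + \vol(X)^{-1/m}$ and the claimed coefficient on $\|u\|_p$.

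For optimality, suppose \eqref{Second Order Sobolev ineq} holds with some constant $C$. By Theorem \ref{HessianCutOff}, the condition $2p^* < m - \dim(Y)$ provides a sequence $\{\chi_n\}$ of $(2,p^*)$-cut-offs; since $\vol(X) < \infty$, H\"older's inequality shows that these are simultaneously $(1,q)$- and $(2,q)$-cut-offs for every $q \le p^*$, so in particular $\|\nabla \chi_n\|_{p^*}, \|\nabla^2\chi_n\|_p, \|\nabla\chi_n\|_p \to 0$. Substituting $\chi_n$ into \eqref{Second Order Sobolev ineq}, all three gradient terms vanish in the limit while, by dominated convergence, $\|\chi_n\|_{p^*} \to \vol(X)^{1/p^*}$ and $\|\chi_n\|_p \to \vol(X)^{1/p}$. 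Passing to the limit gives $\vol(X)^{1/p^*} \le C\,\vol(X)^{1/p}$, that is $C \ge \vol(X)^{1/p^*-1/p} = \vol(X)^{-1/m}$, which is the asserted optimality.

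The step requiring the most care is the application of Sobolev--Poincar\'e to the scalar $v = |\nabla u|$: this function is only Lipschitz in $\nabla u$ and need not be smooth where $\nabla u$ vanishes, so its membership in $W^{1,p}$ together with the bound $\|\nabla v\|_p \le \|\nabla^2 u\|_p$ must be justified through the distributional Kato inequality rather than a pointwise chain rule. One must also track the exponents carefully: the strengthened hypothesis $2p^* < m - \dim(Y)$, rather than merely $p^* < m - \dim(Y)$, is precisely what simultaneously guarantees the existence of the $(2,p^*)$-cut-offs needed for optimality and leaves room for the whole chain of first-order embeddings applied at exponents $p$ and $p^*$.
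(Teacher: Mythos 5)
Your proposal is correct and follows essentially the same route as the paper: the inequality is obtained by applying the Sobolev--Poincar\'e inequality together with Kato's inequality to $v=|\nabla u|$, controlling $\||\nabla u|_X\|_{p^*}$ by H\"older and finite volume, and adding the first-order optimal inequality of Theorem \ref{B-optimal Constant Theorem}; optimality is obtained exactly as in the paper by testing \eqref{Second Order Sobolev ineq} against the $(2,p^*)$-cut-offs from Theorem \ref{HessianCutOff}, which by H\"older are also $(1,q)$- and $(2,q)$-cut-offs for $q\leq p^*$. Your closing remark on justifying $v=|\nabla u|\in W^{1,p}$ via the distributional Kato inequality is a point the paper glosses over, but it does not change the argument.
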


\nocite{*}
\bibliographystyle{plain}

\bibliography{preprintBib}

\end{document}